\newtheorem{theorem}{Theorem}[section]
\newtheorem{lemma}[theorem]{Lemma}
\theoremstyle{plain}
\numberwithin{equation}{theorem}
\theoremstyle{remark}
\newcommand{\F}{{\mathbb F}}
\newcommand{\Fp}{{\mathbb F}_p}
\newcommand{\Q}{{\mathbb Q}}
\newcommand{\cY}{{\mathcal Y}}
\newcommand{\fo}{\mathfrak o}
\newcommand{\fp}{\mathfrak p}
\newcommand{\Kbar}{\overline K}
\DeclareMathOperator{\bN}{\mathbb{N}}
\newcommand{\A}{{\mathbb A}}
\newcommand{\bP}{{\mathbb P}}
\newcommand{\bG}{{\mathbb G}}
\newcommand{\Zp}{{\mathbb{Z}_p}}
\newcommand{\bC}{{\mathbb C}}
\newcommand{\bQ}{{\mathbb Q}}
\newcommand{\lra}{\longrightarrow}
\newcommand{\cF}{\mathcal{F}}
\newcommand{\cU}{\mathcal{U}}
\newcommand{\cH}{\mathcal{H}}
\newcommand{\Cp}{\bC_p}
\newcommand{\PCp}{\bP^1(\bC_p)}
\newcommand{\Dbar}{\overline{D}}
\begin{document}

\title{Bounding periods of subvarieties of $(\bP^1)^n$}

%\author{E.~Amerik}
%\address{Ekaterina Amerik\\
%Universit\'{e} de Paris-Sud\\ 
%Bâtiment 425\\
%91405 Orsay Cedex\\
%FRANCE
%}
%\email{Ekaterina.Amerik@math.u-psud.fr}

\author{J.~P.~Bell}
\address{
Jason Bell\\
Department of Pure Mathematics\\
University of Waterloo\\
Waterloo, ON N2L 3G1\\
CANADA 
}
\email{jpbell@uwaterloo.ca}

\author{D.~Ghioca}
\address{
Dragos Ghioca\\
Department of Mathematics\\
University of British Columbia\\
Vancouver, BC V6T 1Z2\\
Canada
}
\email{dghioca@math.ubc.ca}

\author{T.~J.~Tucker}
\address{
Thomas Tucker\\
Department of Mathematics\\
University of Rochester\\
Rochester, NY 14627\\
USA
}
\email{ttucker@math.rochester.edu}

\begin{abstract}
Using methods of $p$-adic analysis, along with the powerful result of Medvedev-Scanlon \cite{MS-Annals} for the classification of periodic subvarieties of $(\bP^1)^n$,  we bound the length of the orbit of a periodic subvariety $Y\subset (\bP^1)^n$ under the action of a dominant  endomorphism.
\end{abstract}

\maketitle

\section{Introduction}

In \cite{MorSil-1}, Morton and Silverman conjecture that there is a
constant $C(N,d,D)$ such that for any morphism $f: \bP^N \lra \bP^N$
of degree $d$ defined over a number field $K$ with $[K:\bQ] \leq D$,
the number of preperiodic points of $f$ over $K$ is less than or equal
to $C(N,d,D)$.  This conjecture remains very much open, but in the
case where $f$ has good reduction at a prime $\fp$, a great deal has
been proved about bounds depending on $\fp$, $N$, $d$, $D$ (see
\cite{mike-thesis, Pezda, Hutz-0}).  

In this paper, we study the more general problem of bounding length of the orbit of periodic 
subvarieties of any dimension; we prove the following results.  

\begin{theorem}
\label{main result smooth point}
Let $n\in\mathbb{N}$, let $K$ be a finite extension of $\Q_p$, let
$\kappa$ be its residue field, and let $e$ be the ramification index
of $K/\Q_p$.  Let $f_1,\dots, f_n\in K(x)$ be rational functions of
good reduction modulo $p$ and let $\Phi: (\bP^1)^n \lra (\bP^1)^n$ be
the endomorphism given by
\begin{equation}
\label{form of dominant endomorphism smooth point}
(x_1,\dots, x_n)\mapsto (f_1(x_1),\dots, f_n(x_n)).
\end{equation} 
Let $Y$ be a subvariety of $(\bP^1)^n$, and assume there is a
nonsingular point $(\alpha_1,\dots, \alpha_n)\in Y(K)$. Then there
exists a constant $C$ depending on $n$, $p$, $e$, $[\kappa:\Fp]$ and
the degree of the rational functions $f_i$ (but independent of $Y$)
such that if $Y$ is periodic under the action of $\Phi$, then the
length of its orbit is bounded by $C$.
\end{theorem}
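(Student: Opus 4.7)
The plan is to combine the Medvedev-Scanlon classification of periodic subvarieties of $(\bP^1)^n$ with $p$-adic bounds arising from good reduction, using the smooth $K$-rational point to force the structural data of $Y$ to lie in a bounded set.

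First, I would apply the Medvedev-Scanlon theorem to $Y$. It gives a structural description: each irreducible component of $Y$, after a suitable permutation of coordinates, is cut out by equations of the form $x_i = \beta_i$ for $i$ in a subset $S$ (with $\beta_i$ periodic under $f_i$) together with relations $x_j = g_j(x_{\sigma(j)})$ for other indices $j$ (where $g_j$ is a rational function commuting with a common iterate of the relevant $f_i$), while the remaining coordinates are unconstrained. Since $(\alpha_1,\ldots,\alpha_n)$ is a smooth point it must lie on a unique irreducible component $Y_0$ (smoothness rules out intersections of components), and substituting $\alpha$ into the defining equations yields $\beta_i = \alpha_i \in K$ and $g_j(\alpha_{\sigma(j)}) = \alpha_j$, so both the $\beta_i$ and the $g_j$ are defined over $K$.

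Second, I would bound the orbit length of $Y_0$. This length is the smallest $N$ for which $f_i^N(\beta_i) = \beta_i$ for every $i \in S$ and $f_j^N \circ g_j = g_j \circ f_{\sigma(j)}^N$ for every $j$. The periods of the $K$-rational periodic points $\beta_i$ under $f_i$ are bounded in terms of $p$, $e$, $[\kappa:\Fp]$, and $\deg(f_i)$ by the Morton-Silverman-Pezda bounds for periodic points of rational maps of good reduction. For the commutation data, Ritt's classical theory of commuting rational functions forces $f_{\sigma(j)}$ and $g_j$, up to affine conjugation, to be either power maps, Chebyshev polynomials, or Lattès maps, and in each case the $K$-rationality constraint confines the defining parameters of $g_j$ to a torsion group---roots of unity in $K^\times$ in the polynomial cases, or $K$-rational torsion of an elliptic curve of good reduction in the Lattès case---whose order is bounded in terms of $p$, $e$, and $[\kappa:\Fp]$.

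Finally, I would pass from $Y_0$ to $Y$. Since $\Phi$ permutes the irreducible components of $Y$ and every component admits the same Medvedev-Scanlon description (with the rational data propagated by the action of $\Phi$ starting from $Y_0$), the orbit length of $Y$ is controlled by the bound obtained for $Y_0$. The main obstacle is the Lattès case in the analysis of $g_j$: the required uniform torsion bound for an elliptic curve over $K$ of good reduction must be assembled from the injectivity of reduction on prime-to-$p$ torsion (giving a bound in terms of $|\kappa|$) together with the standard bound on the $p$-part of $E(K)_{\tor}$ in terms of $e$ and $p$.
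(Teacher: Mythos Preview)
Your second step contains a genuine gap, concentrated in the disintegrated case. Your appeal to Ritt's theory is inverted: Ritt's theorem says that two commuting rational functions of degree $\geq 2$ are either both special (power/Chebyshev/Latt\`es) \emph{or} share a common iterate. So when $f_{\sigma(j)}$ is disintegrated---which is precisely the generic case and the one the Medvedev--Scanlon machinery is built for---Ritt gives no torsion-type constraint whatsoever; it only tells you that $g_j$ and $f_{\sigma(j)}$ have a common iterate, and nothing about roots of unity or elliptic torsion enters. Your entire bounding mechanism for the ``commutation data'' therefore applies only to the special case and leaves the disintegrated case completely open. Relatedly, the Medvedev--Scanlon description is not as simple as $x_i=\beta_i$ together with graphs $x_j=g_j(x_{\sigma(j)})$: a periodic subvariety is an irreducible component of an intersection of pullbacks $\pi_{i,j}^{-1}(C_{i,j})$ of periodic \emph{curves} $C_{i,j}\subset(\bP^1)^2$, and those curves need not be graphs at all.

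Bounding the period of such a curve $C_{i,j}$ when $f_i,f_j$ are disintegrated is in fact the heart of the paper, and it is done not by Ritt/torsion but by a $p$-adic analytic parametrization of the orbit of a $K$-point on $C_{i,j}$ whose two coordinates are both non-preperiodic (the paper's Theorem~\ref{main result curve}). This is also where the smooth $K$-point is genuinely used: not merely to make structural data $K$-rational, but---via the $p$-adic implicit function theorem---to produce an uncountable $p$-adic neighbourhood of $K$-points on $Y$, among which one finds a point with every coordinate non-preperiodic (or else one reduces $n$ by induction). That reduction takes Theorem~\ref{main result smooth point} down to Theorem~\ref{main result}, after which the paper splits into disintegrated/special/degree-$1$ cases; your torsion argument is essentially what handles the special case there, but it cannot substitute for the $p$-adic analysis in the disintegrated case.
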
 

We prove Theorem~\ref{main result smooth point} as a consequence of the next result.

\begin{theorem}
\label{main result}
Let $K$ be a finite extension of $\Q_p$,  let $\kappa$ be its residue field and let $e$ be the ramification index of $K/\Q_p$. 
Let $f_1,\dots, f_n\in K(x)$ be rational functions of good reduction modulo $p$  and let $\Phi: (\bP^1)^n \lra (\bP^1)^n$ be the endomorphism given by 
\begin{equation}
\label{form of dominant endomorphism}
(x_1,\dots, x_n)\mapsto (f_1(x_1),\dots, f_n(x_n)).
\end{equation} 
Let $Y$ be a subvariety of $(\bP^1)^n$, and assume there is a point $(\alpha_1,\dots, \alpha_n)\in Y(K)$ such that for each $i=1,\dots, n$, we have that  $\alpha_i$ is a nonpreperiodic point for $f_i$. Then there exists a constant $C$ depending on $n$, $p$, $e$, $[\kappa:\Fp]$, and the degree of the rational functions $f_i$  (but independent of $Y$) such that if $Y$ is periodic under the action of $\Phi$, then the length of its
orbit is bounded
by $C$. 
\end{theorem}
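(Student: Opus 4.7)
The strategy is to use $p$-adic analytic interpolation of the $\Phi$-orbit of $(\alpha_1,\dots,\alpha_n)$ to force this orbit to lie entirely in $Y$, and then to combine the Medvedev-Scanlon classification \cite{MS-Annals} with the Zariski density of the orbit (coming from the nonpreperiodicity of the coordinates) to show that $Y$ is actually invariant under a bounded iterate of $\Phi$; the constant $C$ is then precisely that iterate.

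First, I set up the interpolation. Each reduction $\bar\alpha_i\in \bP^1(\kappa)$ is preperiodic under $\bar f_i$ with preperiod and period at most $|\bP^1(\kappa)|=|\kappa|+1$, so after replacing $(\alpha_1,\dots,\alpha_n)$ by a bounded forward iterate (which lies on a $\Phi$-translate of $Y$ of the same orbit length and is still nonpreperiodic coordinatewise) I may assume each $\bar\alpha_i$ is fixed by $\bar f_i$. The $p$-adic arc lemma for rational maps of good reduction then yields a positive integer $M$, bounded in terms of $p$, $e$, $[\kappa:\Fp]$, and $\deg f_i$, together with $K$-analytic functions $\psi_i\colon \Z_p \to D_i$ (where $D_i\subset\bP^1(K)$ is the residue disk of $\alpha_i$) satisfying $\psi_i(k)=f_i^{Mk}(\alpha_i)$ for every $k\in\Z_{\ge 0}$; in particular $\psi_i(k+1)=f_i^M(\psi_i(k))$ for every $k\in\Z_p$. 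Set $\Phi':=\Phi^M$ and let $N$ be the orbit length of $Y$ under $\Phi'$; since the orbit length of $Y$ under $\Phi$ divides $MN$, it suffices to bound $N$.

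Second, I show that $(\psi_1(k),\dots,\psi_n(k))\in Y$ for every $k\in\Z_p$. Because $(\Phi')^N(Y)=Y$, the sequence $\bigl((\Phi')^{Nk}(\alpha_1,\dots,\alpha_n)\bigr)_{k\ge 0}$ lies in $Y$, so for each polynomial $g$ in the defining ideal of $Y$ the $K$-analytic function $k\mapsto g(\psi_1(Nk),\dots,\psi_n(Nk))$ on $\Z_p$ vanishes on $\Z_{\ge 0}$; Strassmann's theorem then forces it to vanish on all of $\Z_p$. Substituting $m=Nk$, the analytic function $F(m):=g(\psi_1(m),\dots,\psi_n(m))$ vanishes on the positive-radius disk $N\Z_p\subset\Z_p$, and a second application of Strassmann (the $p$-adic identity principle) gives $F\equiv 0$ on $\Z_p$, proving the claim.

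Third, I apply Medvedev-Scanlon. Reducing to the case where $Y$ is irreducible, the nonpreperiodicity of each $\alpha_i$ rules out defining equations of the form $x_i=c$ with $c$ periodic, so after permuting coordinates and partitioning them into chains, $Y$ is cut out by equations $x_j=h_j(x_\ell)$, where $\ell$ is the chain leader and each $h_j$ commutes with some iterate of the pair $(f_\ell,f_j)$. From $(\psi_1(k),\dots,\psi_n(k))\in Y$ I obtain $\psi_j(k)=h_j(\psi_\ell(k))$ for every $k\in\Z_p$; applying this at $k$ and at $k+1$ and using $\psi_i(k+1)=f_i^M(\psi_i(k))$ yields $(f_j^M\circ h_j)(\psi_\ell(k))=(h_j\circ f_\ell^M)(\psi_\ell(k))$ for every $k\in\Z_p$. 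The image $\psi_\ell(\Z_p)$ contains the infinite set $\{f_\ell^{Mk}(\alpha_\ell):k\ge 0\}$ and hence is Zariski-dense in $\bP^1$, so $f_j^M\circ h_j=h_j\circ f_\ell^M$ as rational maps. Consequently $\Phi'(Y)=Y$, so $N=1$ and the orbit length of $Y$ under $\Phi$ is at most $M$.

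The main obstacle is securing a genuinely \emph{uniform} $p$-adic arc lemma: a single bound on $M$ in terms of $p$, $e$, $[\kappa:\Fp]$, and $\deg f_i$ that is independent of $Y$ and of the $\alpha_i$. One must arrange, by a bounded iterate of each $f_i$, that the local multipliers at all fixed residue classes lie in the convergence region of the $p$-adic logarithm; this is delicate when $p$ is small or $K/\Q_p$ is wildly ramified, and requires extra care in the exceptional (monomial, Chebyshev, Lattès) cases, where the Medvedev-Scanlon chains still have the same shape but admit richer centralizers—fortunately the Zariski-density argument at the end does not care about the particular shape of the $h_j$.
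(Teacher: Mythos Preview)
Your interpolation step has a genuine gap. The ``$p$-adic arc lemma'' you invoke---an analytic map $\psi_i\colon\Z_p\to D_i$ with $\psi_i(k)=f_i^{Mk}(\alpha_i)$---exists only when the residue disk $D_i$ is \emph{quasiperiodic}, i.e.\ when the multiplier $\lambda_i$ of the periodic point of $f_i$ in $D_i$ satisfies $|\lambda_i|_p=1$. If $|\lambda_i|_p<1$ (attracting) or $\lambda_i=0$ (superattracting), then $f_i^{Mk}(\alpha_i)=u_i(\lambda_i^k\mu_i)$ (respectively $u_i(\mu_i^{m_i^k})$), and $k\mapsto\lambda_i^k$ does \emph{not} extend analytically to $\Z_p$: writing $\lambda_i=\pi^{e_i}\cdot(\text{unit})$ with $e_i\ge 1$, the factor $\pi^{e_ik}$ has no $p$-adic meaning for $k\in\Z_p\setminus\Z_{\ge 0}$. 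No bounded iterate repairs this, since $|\lambda_i^M|_p<1$ for every $M\ge 1$; your closing remark about ``arranging that the multipliers lie in the convergence region of the $p$-adic logarithm'' is exactly what fails, because a non-unit can never be brought there. Since nothing in the hypotheses prevents $\bar\alpha_i$ from landing in an attracting or superattracting cycle of $\bar f_i$, Step~1 breaks down and Steps~2--3 have nothing to stand on.

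The paper confronts precisely this obstruction. For curves in $(\bP^1)^2$ it parametrizes the orbit not by a function of $n$ alone but by a power series in the several quantities $n$, $\pi^n$, and $\pi^{m_i^n}$ (Lemmas~\ref{lem:attr}--\ref{lem:siegel} and Steps~(i)--(vi) of Theorem~\ref{main result curve}), and then replaces your Strassmann/identity-principle argument by the density argument of \cite{gap-compo}. For general $n$ it does not attempt a direct interpolation at all: it uses \cite{MS-Annals,Alice} to split off the disintegrated, special, and degree-one coordinates, reduces the disintegrated piece to the curve case, and handles the special piece by lifting to a semiabelian variety where periodic subvarieties are torsion cosets. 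Your Medvedev--Scanlon step is also too optimistic: even for disintegrated rational maps the periodic curves in $(\bP^1)^2$ need not be graphs $x_j=h_j(x_\ell)$ of a single-valued rational map (the sharp decomposition results of \cite{MS-Annals} are for polynomials), and in the special case the description is entirely different---so the commutation argument in your Step~3 would not go through even if the interpolation did.
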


We note that any dominant (regular) endomorphism $\Phi$ of $(\bP^1)^n$ is split, i.e., it is of the form 
$$(x_1,x_2,\dots, x_n)\mapsto \left(f_1(x_{\sigma(1)}),f_2(x_{\sigma(2)}),\dots, f_n(x_{\sigma(n)})\right),$$
where the $f_i$'s are rational functions, while $\sigma$ is a permutation of $\{1,\dots, n\}$. So, at the expense of replacing $\Phi$ by an iterate $\Phi^k$ (for some positive integer $k$ dividing $n!$), each dominant endomorphism of $(\bP^1)^n$ is of the form \eqref{form of dominant endomorphism}. 

In \cite{BGT-IMRN}, a similar result was obtained bounding the length of the orbit of any periodic subvariety (with a distinguished smooth $p$-adic point of good reduction) under the action of an \'etale endomorphism of an arbitrary variety (not only $(\bP^1)^n$). However, the presence of ramification as it is in our case adds new complications. Also, we note that in \cite{GN}, using results regarding the decomposition of polynomials, it was shown that the length of the orbit of a periodic subvariety of $\A^n$ under the coordinatewise action of $n$ one-variable polynomials is uniformly bounded by a constant depending only on $n$ and on the degrees of the polynomials. It is important to point out that the method from \cite{GN} cannot be extended to treat the case of rational functions; essentially, for rational functions, one lacks all the sharper results regarding their decomposition, as established in \cite{MS-Annals} for the case of polynomials.  

In our Theorem~\ref{main result}, the constant $C$ bounding the length of the orbit of a periodic subvariety of $(\bP^1)^n$ depends on the degrees of the rational functions $f_i$ and also on the good reduction data associated to our problem, which refers to the knowledge about a single point on $Y$. So, from this point of view, our result is less restrictive than the one obtained in  \cite{Hutz-1}, where a similar bound as the one from our Theorem~\ref{main result} is established under the additional assumption that also the periodic subvariety $Y$ (along with its iterates under $\Phi$) have good reduction modulo $p$, i.e., reducing modulo $p$ yields a  subvariety $\bar{Y}$, which is an irreducible subvariety defined over $\overline{\Fp}$ of same degree as $Y$ (and a similar property holds for each iterate of $Y$ under $\Phi$). Even though the result from \cite{Hutz-1} is valid for more general dynamical systems (i.e., endomorphisms of $\bP^n$ as opposed to endomorphisms of $(\bP^1)^n$), the additional hypothesis regarding the good behaviour of the periodic subvariety modulo $p$ makes it more restrictive than our Theorem~\ref{main result}.

Theorem~\ref{main result} is proved using a $p$-adic
analytic parametrization of forward orbits under the action of a rational function (such as in \cite{gap-compo}), combined with the description of periodic subvarieties of $(\bP^1)^n$ (obtained by Medvedev-Scanlon \cite{MS-Annals}).  

We start by describing in Section~\ref{section setup} the various useful results regarding the $p$-adic parametrization of orbits of points under the action of a rational function. We continue by proving in Section~\ref{section curves} the special case of Theorem~\ref{main result} when the periodic subvariety is a curve contained in $(\bP^1)^2$ (see Theorem~\ref{main result curve}). We conclude our paper by reducing the general Theorem~\ref{main result} to the special Theorem~\ref{main result curve}, using the description of periodic subvarieties of $(\bP^1)^n$ provided by Medvedev-Scanlon in \cite{MS-Annals}; then we derive the conclusion of Theorem~\ref{main result smooth point} as a consequence of Theorem~\ref{main result}.

%%%%%%%%%%%%%%%%%%%%%%%%%%%%%%%%%%%%%%%%%%%%%%%%%%%%%%%%%%%%%%%%%%%%%%%%%%%%%%%
%%%%%%%%%%%%%%%%%%%%%%%%%%%%%%%%%%%%%%%%%%%%%%%%%%%%%%%%%%%%%%%%%%%%%%%%%%%%%%%

\section{Useful results}
\label{section setup}

In this Section, we follow the setup and the results from \cite[Section~2]{gap-compo}; see also \cite[Chapter~6]{book}.  
For a prime $p$, we let $\Zp$  denote the ring of
$p$-adic integers, $\Q_p$ will denote the field of $p$-adic
rationals, and $\Cp$ will denote the completion of an algebraic
closure of $\Q_p$ with respect to the $p$-adic absolute value $|\cdot |_p$. 
Given a point $y\in\Cp$ and a real number $r>0$, write
$$D(y,r) = \{x\in\Cp : |x-y|_p < r\}, \quad
\Dbar(y,r) = \{x\in\Cp : |x-y|_p \leq r\}$$
for the open and closed disks, respectively, of radius $r$
about $y$ in $\Cp$. We write $[y]\subseteq \PCp$ for the residue class of a point $y\in\PCp$.
That is, $[y]=D(y,1)$ if $|y|\leq 1$,
or else $[y]=\PCp\setminus\Dbar(0,1)$ if $|y|>1$.

Let $K$ be a finite extension of $\Q_p$, let $\fo_K$ be its ring of $p$-adic integers, let $\pi$ be a uniformizer for $\fo_K$ and let $\kappa$ be its corresponding residue field.  Let $f\in K(x)$ be a rational function of degree $d\ge 1$. We say that $f$ has good reduction modulo $\pi$ (or equivalently modulo $p$) if writing the rational function $f$ as $$f([X_0:X_1])=[f_0(X_0,X_1):f_1(X_0,X_1)]$$
for some coprime homogeneous polynomials $f_0,f_1\in \fo_K[X_0,X_1]$
of degree $d$ for which at least one coefficient is a $\pi$-adic unit,
then reducing the coefficients of both $f_0$ and $f_1$ modulo $\pi$
(or equivalently, modulo $p$)  yields two coprime homogeneous
polynomials in $\kappa[X_0,X_1]$ of same degree $d$.  Note that our
choice of coordinates here induces a choice of model
$\bP_{\fo_K}^1$ and a morphism $F:  \bP_{\fo_K}^1 \lra
\bP_{\fo_K}^1$ of $\fo_K$-schemes such that $(F)_K = f$ when $f$
has good reduction at $\pi$.

The action of a $p$-adic power series
$f\in\fo_K[[z]]$ on $D(0,1)$ 
is either attracting
(i.e., $f$ contracts distances)
or quasiperiodic
(i.e., $f$ is distance-preserving),
depending on its linear coefficient.
Rivera-Letelier
gives a more precise description of this dichotomy
in \cite[Sections~3.1 and~3.2]{Riv1}. 
The following two Lemmas follow from \cite[Propositions~3.3~and~3.16]{Riv1} and they were stated in \cite[Lemmas~2.1~and~2.2]{gap-compo} in the case $K=\Q_p$; the general case follows by an identical argument (for more details regarding the applications to arithmetic dynamics of the $p$-adic parametrization of orbits, see \cite{book}).

\begin{lemma}
\label{lem:attr}
Let $K$ be a finite extension of $\Q_p$, let $\fo_K$ be its ring of $p$-adic integers, and let $\pi$ be a uniformizer for $\fo_K$. We let $f(z)=a_0+a_1 z + a_2 z^2 + \cdots \in\fo_K[[z]]$ be a
nonconstant power series with $|a_0|_p, |a_1|_p < 1$.
Then there is a point $y\in \pi\cdot \fo_K$ such that
$f(y)=y$, and  $\lim_{n\rightarrow\infty}f^n(z)=y$
for all $z\in D(0,1)$.
Write $\lambda=f'(y)$; then
$|\lambda|_p < 1$,
and:

\begin{enumerate}
\item (Attracting).
If $\lambda\neq 0$, then there is a radius $0<r<1$ and
a power series $u\in K[[z]]$ mapping
$\Dbar(0,r)$ bijectively onto $\Dbar(y,r)$
with $u(0)=y$, such that 
for all $z\in D(y,r)$ and $n\geq 0$,
$$f^n(z) = u(\lambda^n u^{-1}(z)).$$

\item (Superattracting).
If $\lambda=0$, then write $f$ as
$$f(z) = y + c_m (z-y)^m + c_{m+1} (z-y)^{m+1} + \cdots \in\fo_K[[z-y]]$$
with $m\geq 2$ and $c_m\neq 0$.
If $c_m$ has an $(m-1)$-st root in $K$ (or equivalently, in $\fo_K$), then 
there are radii 
$0<r,s < 1$ and a power series $u\in K[[z]]$ mapping
$\Dbar(0,s)$ bijectively onto $\Dbar(y,r)$
with $u(0)=y$, such that
for all $z\in D(y,r)$ and $n\geq 0$,
%$$f^n(z) = u\Big(c^{(m^n-1)/(m-1)} (u^{-1}(z))^{m^n}\Big).$$
$$f^n(z) = u\Big( (u^{-1}(z))^{m^n}\Big).$$
\end{enumerate}
\end{lemma}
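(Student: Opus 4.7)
My plan is to prove the three assertions in sequence: existence of the attracting fixed point $y$, then the K\"onigs linearization in the attracting case, and finally the B\"ottcher conjugacy in the superattracting case. For the fixed point, the hypothesis $|a_0|_p,|a_1|_p<1$ together with $f\in\fo_K[[z]]$ forces $f$ to carry $D(0,1)$ into itself as a strict contraction: the ultrametric estimate
\[
|f(z)-f(w)|_p \le \max\bigl(|a_1|_p,\max(|z|_p,|w|_p)\bigr)\cdot |z-w|_p
\]
is strictly smaller than $|z-w|_p$ on $D(0,1)$. A standard non-archimedean Banach fixed point argument then yields a unique $y\in\pi\cdot\fo_K$ with $f(y)=y$ and $f^n(z)\to y$ for every $z\in D(0,1)$, and since $\lambda=f'(y)=a_1+2a_2 y+\cdots$ with every summand of absolute value less than $1$, we obtain $|\lambda|_p<1$.

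For part (i), I would run the classical K\"onigs linearization in the non-archimedean setting. Set $g(z):=f(z+y)-y=\lambda z+c_2 z^2+\cdots$; on a disk $\{|z|_p\le r\}$ with $r<|\lambda|_p$ the linear term dominates, so $|g^n(z)|_p=|\lambda|_p^n|z|_p$. Define $\phi_n(z):=\lambda^{-n}g^n(z)$; the estimate $|g(w)-\lambda w|_p\le |w|_p^2$ gives
\[
|\phi_{n+1}(z)-\phi_n(z)|_p = |\lambda|_p^{-n-1}\bigl|g(g^n(z))-\lambda g^n(z)\bigr|_p \le |\lambda|_p^{n-1}|z|_p^2,
\]
which decays geometrically. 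Hence $\phi_n$ converges to a power series $\phi(z)=z+O(z^2)$ satisfying $\phi\circ g=\lambda\cdot\phi$; inverting $\phi$, which has unit linear coefficient, and setting $u(z):=y+\phi^{-1}(z)$ yields the stated identity after iterating the functional equation.

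For part (ii), I would carry out the parallel B\"ottcher construction. With $\lambda=0$ and $g(z):=f(z+y)-y=c_m z^m+\cdots$, picking $b\in\fo_K$ with $b^{m-1}=c_m$ lets me conjugate by $z\mapsto z/b$ to reduce to $g(z)=z^m+O(z^{m+1})$. On a small enough disk this gives $|g^n(z)|_p=|z|_p^{m^n}$, and the sequence $\psi_n(z):=g^n(z)^{1/m^n}$, where the $m^n$-th root is extracted via the $p$-adic binomial series applied to $g^n(z)/z^{m^n}$, is shown to converge to a power series $\psi(z)=z+O(z^2)$ satisfying $\psi(g(z))=\psi(z)^m$. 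Setting $u(z):=y+b^{-1}\psi^{-1}(z)$ and iterating delivers the claimed formula $f^n(z)=u\bigl((u^{-1}(z))^{m^n}\bigr)$.

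The main obstacle is convergence and radius control for the B\"ottcher series $\psi_n$, which is significantly more delicate than the K\"onigs estimate, especially when $p\mid m$: extracting an $m^n$-th root requires the argument of the binomial series to lie well inside its $p$-adic disk of convergence, so the initial disk has to be shrunk by a factor depending on $p$, $e$, and $m$. The hypothesis that $c_m$ admit an $(m-1)$-st root in $\fo_K$ is precisely what normalizes the leading coefficient, ensuring the remaining root extraction is performed on elements of $1+\pi\fo_K$ where $p$-adic convergence can be secured uniformly in $n$.
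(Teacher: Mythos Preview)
Your outline is correct and follows the classical K\"onigs/B\"ottcher route, which is exactly what the paper invokes: the paper does not give an independent argument but simply refers to \cite[Lemma~2.1]{gap-compo} (which in turn rests on Rivera-Letelier \cite[Propositions~3.3~and~3.16]{Riv1}) and, for the superattracting case, to \cite{Silverman-new}. Your sketch is precisely the content of those references, so the approaches coincide.

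One minor point worth tightening in your write-up: the Lipschitz estimate you state is correct, but the contraction factor $\max\bigl(|a_1|_p,\max(|z|_p,|w|_p)\bigr)$ is not uniform on the open disk $D(0,1)$ in $\Cp$, so the Banach fixed-point theorem does not apply directly there. The clean fix is to observe that $f$ maps $D(0,1)$ into the closed disk $\Dbar(0,c)$ with $c=\max(|a_0|_p,|a_1|_p)<1$ after one iterate (or finitely many, using $|f(z)|_p\le\max(c,|z|_p^2)$), and on $\Dbar(0,c)\cap K=\pi\fo_K$ the map is a genuine contraction with factor $c$, yielding the fixed point in $\pi\fo_K$; convergence of arbitrary orbits in $D(0,1)$ then follows. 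Your identification of the B\"ottcher convergence (root extraction when $p\mid m$) as the delicate step is accurate and is exactly why the paper points to \cite{Silverman-new}.
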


\begin{proof}
The proof is identical to the proof of \cite[Lemma~2.1]{gap-compo}; see also \cite{Silverman-new} for a similar result in the superattracting case.  
\end{proof}

The next result is a slight generalization of \cite[Lemma~2.2]{gap-compo}.

\begin{lemma}
\label{lem:siegel}
Let $K$, $\fo_K$ and $\pi$ be as in Lemma~\ref{lem:attr} and let $\kappa$ be the residue field of $K$. Let $f(z)=a_0+a_1 z + a_2 z^2 + \cdots \in\fo_K[[z]]$ be a
nonconstant power series with $|a_0|_p<1$ but $|a_1|_p = 1$.
Then for any nonperiodic $x\in \pi\cdot\fo_K$,
there are: an integer $k\geq 1$ depending only on $p$, $e$ and $[\kappa:\Fp]$,
radii $0<r< 1$ and $s\geq |k|_p$,
and a power series $u\in K[[z]]$ 
mapping $\Dbar(0,s)$ bijectively onto $\Dbar(x,r)$
with $u(0)=x$,
% and $u'(0)\neq 0$;
such that for all $z\in \Dbar(x,r)$ and $n\geq 0$,
$$f^{nk}(z) = u(nk+ u^{-1}(z)).$$
%Moreover, if we write $u(z) = b_0 + b_1 z + b_2 z^2 + \cdots$,
%then $|b_i|_p < s^{-i}$ for all $i\geq 0$.
\end{lemma}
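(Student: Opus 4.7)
The plan is to exhibit an iterate $f^k$, with $k$ controlled by $p$, $e$, and $[\kappa:\Fp]$, to which a translation-type linearization of Rivera-Letelier (\cite[Proposition~3.16]{Riv1}) can be applied on the residue disk containing $x$. First I would check that, since $|a_0|_p<1$ and $|a_1|_p=1$ with $a_i\in\fo_K$, the series $f$ preserves $D(0,1)=\pi\cdot\fo_K$ and acts as an isometry there: for $z_1,z_2\in\pi\cdot\fo_K$ one has $f(z_1)-f(z_2)=(z_1-z_2)(a_1+a_2(z_1+z_2)+\cdots)$, whose second factor reduces to $a_1\not\equiv 0\pmod\pi$ and is therefore a unit.

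Consequently $f$ permutes the $q=|\kappa|$ residue disks of radius $|\pi|_p$ contained in $\pi\cdot\fo_K$. Taking $k_1$ to be the exponent of the symmetric group on $q$ letters, the iterate $f^{k_1}$ fixes setwise the residue disk $\Dbar(x,|\pi|_p)$. Rewriting $f^{k_1}$ in the local coordinate $w=z-x$ yields a power series $g(w)=b_0+b_1w+\cdots\in\fo_K[[w]]$ with $|b_0|_p<1$ and $b_1=(f^{k_1})'(x)$ a unit. A further iteration is then required to push the linear part close to $1$: since $\fo_K^\times/(1+\pi\cdot\fo_K)\cong\kappa^\times$ has order $q-1$, while $1+\pi\cdot\fo_K$ is a pro-$p$ group on which the $p$-adic logarithm converges once one restricts to $1+\pi^N\cdot\fo_K$ with $N>e/(p-1)$, there is an integer $k_2$, bounded in terms of $p$, $e$, and $[\kappa:\Fp]$, so that $b_1^{k_2}\in 1+\pi^N\cdot\fo_K$ for the $N$ we shall need. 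Set $k=k_1k_2$.

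Applying \cite[Proposition~3.16]{Riv1} to the iterate $f^k$ on the residue disk $\Dbar(x,|\pi|_p)$ — or repeating the proof of \cite[Lemma~2.2]{gap-compo} essentially verbatim in the present ramified setting — produces radii $0<r<1$ and $s\geq|k|_p$ together with a power series $u\in K[[z]]$ satisfying $u(0)=x$ and conjugating $f^k$ on $\Dbar(x,r)$ to the translation $w\mapsto w+k$ on $\Dbar(0,s)$. The lower bound $s\geq|k|_p$ comes from the convergence radius of the $p$-adic logarithm on $1+\pi^N\cdot\fo_K$ and the fact that $u$ is built from an exponential-type power series composed with a suitable root of the logarithm of $(f^k)'(x)$. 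The non-periodicity hypothesis on $x$ is used just as in the unramified case of \cite{gap-compo}: it rules out the degenerate branch of the Rivera-Letelier classification and ensures that $u^{-1}(x)=0$ is a genuine initial condition in $\Dbar(0,s)$ rather than a periodic phase.

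The main obstacle is quantitative: one must track how $k_2$, and therefore $|k|_p$, depends on $e$ and $[\kappa:\Fp]$, since ramification raises the convergence threshold $N>e/(p-1)$ for the logarithm and forces an additional $p$-power factor in $k$. Because every input is finite data determined by $p$, $e$, and $[\kappa:\Fp]$, the resulting integer $k$ is indeed independent of the particular $f$ satisfying the hypothesis on the first two coefficients and of the choice of nonperiodic $x\in\pi\cdot\fo_K$, which is precisely what the lemma claims.
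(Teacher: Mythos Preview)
Your approach is correct and is exactly the route the paper advertises in the paragraph \emph{preceding} the lemma (``follow from \cite[Propositions~3.3~and~3.16]{Riv1}\ldots the general case follows by an identical argument''): iterate until $f^k$ is congruent to the identity modulo a high enough power of $\pi$, then invoke Rivera-Letelier's translation-type linearization, tracking that the number of iterates depends only on $p$, $e$, and $[\kappa:\Fp]$. The paper's one-line proof, however, instead cites \cite[Proposition~2.1]{BGT-IMRN} together with \cite{Poonen}: Poonen's $p$-adic interpolation theorem directly furnishes the power series $u$ once one has an iterate $f^k$ with $f^k(z)\equiv z\pmod{p^c}$ for $c>e/(p-1)$, and the bound on such a $k$ is packaged in the BGT-IMRN reference. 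The two routes are equivalent in content---Poonen's interpolation is a clean repackaging of the same $p$-adic logarithm/exponential machinery that underlies Rivera-Letelier's quasiperiodic linearization---so your expansion is a valid proof, just more hands-on than the paper's citation. One minor slip: the $q$ sub-disks you permute inside $\pi\fo_K$ are the cosets of $\pi^2\fo_K$, of radius $|\pi|_p^2$, not $|\pi|_p$ (indeed $\Dbar(x,|\pi|_p)\cap K=\pi\fo_K$ whenever $x\in\pi\fo_K$, so that disk is already fixed by $f$); this does not affect the argument.
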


\begin{proof}
The conclusion of Lemma~\ref{lem:siegel} (including the fact that $k$ is bounded solely in terms of $p$, $e$ and $[\kappa:\Fp]$) follows by combining \cite[Proposition~2.1]{BGT-IMRN} with \cite{Poonen}.   
\end{proof}

%%%%%%%%%%%%%%%%%%%%%%%%%%%%%%%%%%%%%%%%%%%%%%%%%%%%%%%%%%%%%%%%%%%%%%%%%%%%%%%
%%%%%%%%%%%%%%%%%%%%%%%%%%%%%%%%%%%%%%%%%%%%%%%%%%%%%%%%%%%%%%%%%%%%%%%%%%%%%%%

\section{The case of curves}
\label{section curves}

We first prove Theorem~\ref{main result} when $Y$ is a curve in $\bP^1\times \bP^1$.
\begin{theorem}
\label{main result curve}
Let $K$ be a finite extension of $\Q_p$, let $\kappa$ be its residue field and let $e$ be the ramification index of $K/\Q_p$. 
Let $f_1,f_2\in K(x)$ be rational functions of good reduction modulo $p$ and let $\Phi: (\bP^1)^2 \lra (\bP^1)^2$ be the endomorphism given by $(x_1,x_2)\mapsto (f_1(x_1),f_2(x_2))$.  
Let $Y$ be an irreducible, periodic curve of $(\bP^1)^2$ defined over $K$, and assume there is a point $\alpha$ on $Y(K)$ which is  not preperiodic under the action of $\Phi$. Then there exists a constant $c$ depending on the degress of the rational functions $f_i$ and also depending on $p$, $e$ and $[\kappa:\Fp]$, but independent of $Y$, such that the length of the 
orbit of $Y$ under $\Phi$ is bounded
by $c$.
\end{theorem}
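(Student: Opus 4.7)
The plan is to use the $p$-adic analytic linearizations from Lemmas~\ref{lem:attr} and~\ref{lem:siegel} to produce, through $\alpha$, a $p$-adic analytic arc inside $Y$, and then exploit the Zariski-density of that arc in $Y$ to force $Y$ itself to be invariant under an iterate $\Phi^k$ with $k$ bounded in terms of $p$, $e$, $[\kappa:\Fp]$ and the degrees of the $f_i$; the desired bound on the period $N$ of $Y$ will then follow since $N \mid k$. I first dispose of the case when $Y$ is a coordinate fibre $\{a\}\times\bP^1$ or $\bP^1\times\{a\}$: the period of such $Y$ under $\Phi$ equals the period of $a$ under $f_1$ or $f_2$, which is bounded by the standard $p$-adic periodic-point estimates (Morton--Silverman--Pezda--Hutz) in terms of the allowed parameters. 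Assuming from now on that $Y$ projects dominantly to both factors, a short argument shows that each $\alpha_i$ must itself be non-preperiodic for $f_i$: if say $\alpha_1$ were preperiodic, then infinitely many orbit points of $\alpha$ would share their first coordinate, forcing $Y$ to contain a vertical fibre, a contradiction.

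Next, applying Lemmas~\ref{lem:attr}--\ref{lem:siegel} to each $f_i$, I choose an integer $k \geq 1$, bounded in terms of $p$, $e$, $[\kappa:\Fp]$ and the degrees of the $f_i$, so that for $i=1,2$ the iterate $f_i^k$ stabilizes the residue class of $\alpha_i$ and admits a local uniformizer $u_i$ with $u_i(0) = \alpha_i$ through which the orbit is linearized. In the quasi-periodic case (Lemma~\ref{lem:siegel}) this yields the affine interpolation $\psi_i(t) := u_i(tk)$ on a disc $\Dbar(0,S) \supseteq \Zp$ with $\psi_i(n) = f_i^{nk}(\alpha_i)$ for every $n \in \bN$; the attracting and super-attracting cases require variants of this interpolation, which are the technical heart of the proof and which I discuss as the main obstacle below.

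Now let $N$ be the period of $Y$ under $\Phi$, put $M := \mathrm{lcm}(N,k)$, and let $F$ be a local defining equation of $Y$. In the quasi-periodic case, for every $j \in \bN$ one has $\Phi^{jM}(\alpha) = \gamma(jM/k) \in Y$, where $\gamma(t) := (\psi_1(t), \psi_2(t))$. Hence the $p$-adic analytic function $t \mapsto F(\psi_1(t), \psi_2(t))$ vanishes on $\{jM/k : j \in \bN\}$; since $M/k$ is a positive integer, this set contains $p^m\, M/k$ for every $m \geq 0$, and these converge $p$-adically to $0 \in \Dbar(0,S)$. By the $p$-adic identity theorem, $F \circ \gamma \equiv 0$ on $\Dbar(0,S)$, so the whole arc $\gamma(\Dbar(0,S))$ lies in $Y$. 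Because $\alpha$ is non-preperiodic, $\gamma$ is non-constant, so $\gamma(\Dbar(0,S))$ is an infinite subset of the irreducible curve $Y$ and is therefore Zariski-dense in $Y$. By construction $\Phi^k \circ \gamma(t) = \gamma(t+1)$, and $S \geq 1$ (a consequence of $s \geq |k|_p$ in Lemma~\ref{lem:siegel}) ensures $\Dbar(0,S) + 1 = \Dbar(0,S)$, so $\Phi^k$ maps $\gamma(\Dbar(0,S))$ into itself. Consequently $\Phi^k(Y) \supseteq \gamma(\Dbar(0,S))$ is Zariski-dense in $Y$, and since $\Phi^k(Y)$ is itself an irreducible curve we conclude $\Phi^k(Y) = Y$, whence $N \mid k$.

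The principal obstacle is extending the above to the attracting and super-attracting regimes. There the orbit of $\alpha_i$ converges to the attracting fixed point, but the natural interpolations---$\lambda_i^{t}$ in the attracting case and $v_i^{m_i^{t}}$ in the super-attracting case---are not, in any straightforward way, $p$-adic analytic functions of a continuous parameter $t$, so the identity-theorem argument cannot be invoked directly. Handling these cases will likely require either showing that the multiplicative dependencies between the multipliers $\lambda_i$ forced by $Y$ being periodic already bound $N$, or a finer Newton-polygon/Skolem-type analysis of the bi-disc orbit in the style of \cite{gap-compo}, producing an analytic arc through $\alpha$ inside $Y$ even when the direct interpolation fails.
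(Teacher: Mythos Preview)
Your quasi-periodic argument is correct and clean, and your initial reductions (coordinate fibres, forcing both $\alpha_i$ to be non-preperiodic) match the paper's. But the proposal is incomplete: you explicitly leave the attracting and superattracting regimes unresolved, and these are not a technical afterthought---they are the substance of the proof. Your first suggested workaround (extracting a bound on $N$ from multiplicative relations among the multipliers $\lambda_i$) does not seem to lead anywhere: periodicity of $Y$ imposes no obvious algebraic relation between $\lambda_1$ and $\lambda_2$ that would bound $N$.

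The paper carries out precisely your second suggestion, and the mechanism is worth spelling out because it differs from your quasi-periodic argument in an essential way. One cannot interpolate $n\mapsto f_i^{nk}(\alpha_i)$ by a single-variable $p$-adic analytic function in the (super)attracting case; instead, following \cite{gap-compo}, one shows that after replacing $k$ by a bounded multiple there exist power series $F_{i,\ell}$ in several variables such that
\[
f_i^{\ell+nk}(\alpha_i)=F_{i,\ell}\bigl(n,\pi^n,\pi^{m_i^n}\bigr)
\]
for all $n\ge 0$, where $m_i\ge 2$ is the local order at the superattracting point (and the extra variables collapse in the attracting or quasi-periodic cases). Composing with a defining equation $\cH_\ell$ of $Y$ gives $G_\ell(z_0,z_1,z_{m_1},z_{m_2})$ with $G_\ell(n,\pi^n,\pi^{m_1^n},\pi^{m_2^n})=0$ iff $\Phi^{\ell+nk}(\alpha)\in Y$. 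Now the identity theorem is replaced by a density dichotomy: either some $G_\ell\equiv 0$, in which case $\Phi^{\ell+nk}(\alpha)\in Y$ for all $n$ and (as in your argument) $\Phi^k(Y)=Y$; or every $G_\ell\not\equiv 0$, in which case \cite[Lemma~3.1]{gap-compo} forces the set $\{n:\Phi^{\ell+nk}(\alpha)\in Y\}$ to have natural density $0$ for each $\ell$, contradicting the periodicity of $Y$. This density-zero lemma is the genuine replacement for the $p$-adic identity theorem when the orbit parametrization is multi-variable, and it is what your proposal is missing.
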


\begin{proof}
We let $\alpha:=(\alpha_1,\alpha_2)\in \bP^1_K\times\bP^1_K$. If for some $i=1,2$, we have that $\alpha_i$ is preperiodic for $f_i$, then since $\alpha$ is \emph{not} preperiodic under the action of $\Phi$ but $Y$ is periodic under the action of $\Phi$, we conclude that the curve $Y$ projects to $\alpha_i$ on the $i$-th coordinate axis. Therefore, the length of the orbit of $Y$ is the length of the orbit of $\alpha_i$ (under the action of $f_i$). Because $\alpha_i\in \bP_K^1$ is a periodic point for $f_i$ and moreover, $f_i$ has good reduction modulo $p$, then the length of the orbit of $\alpha_i$ is bounded only in terms of $p$, $e$, $[\kappa:\Fp]$, as proven in \cite{mike-thesis, Hutz-0}; hence the length of the orbit of $Y$ is bounded by a constant which is independent of $Y$.

So, from now on, we may assume that each $\alpha_i$ (for $i=1,2$) is not preperiodic. 

At the expense of replacing $K$ by a finite extension of degree bounded solely in terms of the degrees of the rational functions $f_1$ and $f_2$ (and therefore replacing $e$ and $[\kappa:\Fp]$ by multiples of them bounded by a quantity depending again only in terms of the degrees of the $f_i$'s), we may assume the following. For some $i=1,2$ if $\alpha_i$ modulo $\pi$ is in the  cycle of a periodic critical point $Q$ of $f_i$ of period $\ell$, given a local coordinate $x_{i,Q}$ at $Q$, we write
\begin{equation}
\label{superattracting 1}
f_i^\ell(x_{i,Q})= c_{i,Q}x_{i,Q}^{m_{i,Q}} + O\left(x_{i,Q}^{m_{i,Q}+1}\right),
\end{equation}
where $m_{i,Q}\ge 2$ and $c_{i,Q}\ne 0$; then $K$ contains all the $(m_{i,Q}-1)$-st roots of $c_{i,Q}$. Since there are at most finitely many superattracting periodic points, then we are guaranteed that a finite extension of $K$ suffices. If there is no superattracting periodic point modulo $p$ in the orbit of $\alpha_i$ under the action of $f_i$ modulo $\pi$, then we do not need to extend $K$.

We let $\fo_K$ be the subring of $p$-adic integers in $K$, and we let $\pi$ be a uniformizer for $\fo_K$.

We proceed as in the proof of \cite[Theorem~1.4]{gap-compo} (see also \cite[Chapter~11]{book}); our goal is to show that there exists a positive integer $k$ depending only on data coming from $K$ (i.e, its ramification index and also the size of its residue field) and also there exists a positive integer $\ell$ (which may depend on the $\alpha_i$'s) such that for each $i=1,2$ and for each $0\le j\le k-1$ there exist positive integers $m_i$ and  $p$-adic analytic power series $F_{i,j}\in K[[z_0,z_1,z_{m_i}]]$ such that (modulo linear transformations) we have $p$-adic analytic parametrizations: 
\begin{equation}
\label{p-adic parametrization 1}
f_i^{\ell+j+nk}(\alpha_i)=F_{i,j}\left(n,\pi^n, \pi^{m_i^n}\right)\text{ for each }n\ge 0.
\end{equation}
The integers $m_i$ from \eqref{p-adic parametrization 1} are the same as the integers $m_{i,Q}$ from equation \eqref{superattracting 1}; if $\alpha_i$ is not superattracting, then actually $F_{i,j}$ is a $p$-adic analytic power series only in the variables $z_0$ and $z_1$, or alternatively, $m_i=1$ in that case. 

In order to derive the $p$-adic parametrization \eqref{p-adic parametrization 1}, we proceed as in the proof of \cite[Theorem~1.4]{gap-compo}; in particular, we split our analysis into various steps.

\textbf{Step (i).} 
First we note that there exist positive integers $\ell_0$ and $k_0$ such that the residue class of $f_i^{\ell_0}(\alpha_i)$ (for $i=1,2$) is fixed under the induced action on $\bP_{\kappa}^1$ of the reduction of $f_i^{k_0}$ modulo $\pi$; here we use the fact that each $f_i$ has good reduction modulo $\pi$. Furthermore, note that both integers $k_0$ and $\ell_0$ depend only on $p$ and on $[\kappa:\Fp]$; for more details,  see \cite[Step~(ii),~p.~1063]{gap-compo}.   By a ${\rm PGL}(2,\fo_K)$-change of coordinates for each $i=1,2$, we may assume that $f_i^{\ell_0}(\alpha_i)\in \pi\fo_K$, and therefore $f_i^{k_0}$ may be written as a nonconstant power series in $\fo_K[[z]]$ mapping $D(0,1)$ into itself. 

\textbf{Step (ii).} 
Let $i\in\{1,2\}$. If $\left|f_i^{k_0}\left(f_i^{\ell_0}(\alpha_i)\right)\right|_p<1$, then Lemma~\ref{lem:attr} yields that there is a
point $y_i\in D(0,1)$ fixed by $f_i^{k_{0}}$  
along with radii $r_i$ and $s_i$ (where $s_i := r_i$
in the non-superattracting case), and an associated
power series $u_i\in K[[z]]$.
Set $k_{1,i}=k_{0}$ and $\ell_{1,i}=\ell_{0}+ n_i k_{1,i}$
for a suitable integer $n_i\geq 0$ so that
\begin{equation}
\label{in the disk}
f_i^{\ell_{1,i}}(\alpha_i)\in\Dbar(y_i,r_i). 
\end{equation} 
Define $\lambda_{i}:=(f_i^{k_{0}})'(y_i)$ 
to be the multiplier of the point $y_i$; then in this case, 
$|\lambda_{i}|_p<1$. 
Define $\mu_i:=u_i^{-1}(f_i^{\ell_{1,i}}(\alpha_i))$; note that
$\mu_i\in \pi\cdot \fo_K$, because $s_i<1$.  In addition,
$\mu_i\neq 0$, because $u_i$ is bijective
and $u_i(0)=y_i$ is fixed by $f_i^{k_0}$,
while $u_i(\mu_i)=f_i^{\ell_{1,i}}(\alpha_i)$ is not fixed by $f_i^{k_0}$. 

If
$|(f_i^{k_{0}})'(f_i^{\ell_{0}}(\alpha_i))|_p=1$, we  
apply Lemma~\ref{lem:siegel} to $f_i^{k_{0}}$ and the point
$f_i^{\ell_{0}}(\alpha_i)$ to
obtain radii $r_i$ and $s_i$ and a power series
$u_i$.  Define $\mu_i:=u_i^{-1}(f_i^{\ell_{0}}(\alpha_i))$,
and set $\ell_{1}=\ell_{0}$
and $k_{3,i}=n_i k_{0}$, for a suitable integer $n_i\geq 1$ depending only on $p$, $e$ and $[\kappa:\Fp]$ (see Lemma~\ref{lem:siegel} in which the integer $k$ from its conclusion depends only on $p$, $e$ and $[\kappa:\Fp]$) 
so that $f_i^{k_{3,i} + \ell_{1}}(\alpha_i)\in\Dbar(f_i^{\ell_{1}}(\alpha_i),r_i)$.
(Following the notation from \cite{gap-compo}, the jump from
a subscript of $0$ to $3$ is because certain complications,
to be addressed in Steps~(iii) and~(iv),
do not arise in the quasiperiodic case.)
Note that
$f_i^{\ell_{1,i} + n k_{3,i}}(\alpha_i)$ may be expressed as a
power series in the integer $n\geq 0$; specifically,
$f_i^{\ell_{1,i} + n k_{3,i}}(\alpha_i)=u_i(n k_{3,i}+\mu_i)$.

{\bf Step (iii).}
In this step, we consider only the case that
$0<|\lambda_{i}|_p<1$
(i.e., attracting but not superattracting).
We will express certain functions of $n$
as power series in $n$ and $\pi^n$ and thus obtain a $p$-adic parametrization of the orbit of $\alpha_i$ under $f_i$ as in  \eqref{p-adic parametrization 1}, where the corresponding $p$-adic analytic function $F_{i,j}$ is actually a function of only $2$ variables (i.e., $m_i=1$ with the notation as in \eqref{p-adic parametrization 1}).

Write $\lambda_{i} = \alpha_i \pi^{e_{i}}$, where $e_{i}\geq 1$
and $\alpha_i\in\fo_K^*$.
If $\alpha_i$ is a root of unity, we can
choose an integer $M_{1,i}\geq 1$ such that $\alpha_i^{M_{1,i}}=1$.
If $\alpha_i$ is not a root of unity,
it is well known that
there is an integer $M_{1,i}\geq 1$ such
that $\alpha_i^{n M_{1,i}}$
can be written as a power series in $n$
with coefficients in $\fo_K$. Furthermore, $M_{1,i}$ depends only on $p$, $e$ and $[\kappa:\Fp]$, which can be seen also applying Lemma~\ref{lem:siegel} to the function $z\mapsto \alpha_iz$; for more details, see \cite[Step~(iii)]{gap-compo}. We set
\begin{equation}
\label{k 3 1}
k_{3,i}:= M_{1,i} k_{1,i};
\end{equation}
also, we replace $\lambda_i$ by $\lambda_{i}^{M_{1,i}}$ and replace $e_i$ by  $M_{1,i} \cdot e_{i}$. 
(The subscript again jumps to $3$ in \eqref{k 3 1} because of
the complications of Step~(iv).)
Thus, we can write
\begin{equation}
\label{eq:gdef1}
\lambda_{i}^n = (\pi^n)^{e_{i}} g_{1,i}(n)
\qquad \text{for all integers } n\geq 0,
\end{equation}
for some power series $g_{1,i}(z)\in\fo_K[[z]]$.

{\bf Step (iv).}
In this step, we consider only the
superattracting case, that $\lambda_{i}=0$,
and
we will express certain functions of $n$
as power series in $n$, $\pi^n$, and $\pi^{m_{i}^n}$,
where $m_{i}\geq 2$ is a certain integer (see \eqref{superattracting 1}).

We let first the integer $m_{i,0}$ be the order of the unique
superattracting point of $f_i$ in $D(0,1)$, as in
Lemma~\ref{lem:attr}(ii). Then we write $m_{i,0}:=a_i \pi^{b_i}$,
for some $a_i\in\fo_K^*$ and some integer $b_i\geq 0$.
Then as in Step~(iii),
we can find a positive integer $M_{1,i}$ depending only on $p$, $e$ and $[\kappa:\Fp]$ such that
$a_i^{nM_{1,i}}$ 
can be written as a power series in $n$ with coefficients in $\fo_K$.
Set
$$k_{2,i}:=M_{1,i} \cdot k_{1,i}
\quad \text{and} \quad
m_{i,1}:=m_{i,0}^{M_{1,i}}.$$
Then $m_{i,1}^n$
can be written as a power series in $n$ and $\pi^n$, with coefficients in $\fo_K$.

In addition, recall that $\mu_i=u_i^{-1}(f_i^{\ell_{i}}(\alpha_i))$
satisfies $0<|\mu_i|_p<1$; thus,
we can write $\mu_i=\beta_i \pi^{e_i}$,
where $e_i\geq 1$ and $\beta_i\in\fo_K^*$.
If $\beta_i$ is a root of unity with, say, $\beta_i^{M_{2,i}}=1$
for some positive integer $M_{2,i}$ (depending only on $p$, $e$ and $[\kappa:\Fp]$, since the size of the group of roots of unity contained in $K$ depends only on $p$, $e$ and $[\kappa:\Fp]$), choose a positive integer $M_{3,i}$ (again depending only on $p$, $e$ and $[\kappa:\Fp]$)
so that $M_{2,i} | \big( m_{i,1}^{2M_{3,i}} - m_{i,1}^{M_{3,i}} \big)$.
Set
$$k_{3,i} := M_{3,i} k_{2,i}\text{ and }m_{i,2}:=m_{i,1}^{M_{3,i}}$$
and note that $\beta_i^{m_{i,2}^{n}}$ is constant in $n$.

On the other hand, if $\beta_i$ is not a root of
unity, then as in Step~(iii),
there is an integer $M_{2,i}'$ depending only on $p$, $e$ and $[\kappa:\Fp]$ such that $\beta_i^{n M_{2,i}'}$
can be written as a power series in $n$ with coefficients in $\fo_K$.
As above, choose a positive integer $M_{3,i}'$ (depending only on $p$, $e$ and $[\kappa:\Fp]$) such that
$M_{2,i}' | \big( m_{i,1}^{2M_{3,i}'} - m_{i,1}^{M_{3,i}'} \big)$,
and set
$$k_{3,i} := M_{3,i}' k_{2,i}\text{ and also, } m_{i,2}:=m_{i,1}^{M_{3,i}'}.$$  
Then $m_{i,2}^n\equiv m_{i,2}\pmod{M_{2,i}'}$ for all positive integers $n$, and therefore
$$\beta_i^{m_{i,2}^{n}}
=\beta_i^{m_{i,2}}\cdot \beta_i^{m_{i,2}^n- m_{i,2}}
=\beta_i^{m_{i,2}}\cdot
\Big(\beta_i^{M_{2,i}'}\Big)^{\left(m_{i,2}^n-m_{i,2}\right)/M_{2,i}'}$$ 
can be written as a power series
in $( m_{i,2}^n-m_{i,2} )/M_{2,i}'$ with coefficients in $\fo_K$.
Using the fact that $p\nmid M_{2,i}'$, and expressing
$m_{i,2}^n = (m_{i,1}^n)^{M_{3,i}'}$ as a power series in $n$ and $\pi^n$
with coefficients in $\fo_K$, we conclude that
$\beta_i^{m_{i,2}^{n}}$ can in fact
be written as a power series in $n$ and $\pi^n$,
with coefficients in $\fo_K$.

Thus, whether or not $\beta_j$ is a root of unity, we can write
\begin{equation}
\label{eq:gdef2}
\mu_i^{m_{i,2}^n} = \left( \pi^{m_{i,2}^n} \right)^{e_i} g_{1,i}(n,\pi^n)
\qquad \text{for all integers } n\geq 0,
\end{equation}
for some power series $g_{1,i}(z_0,z_1)\in\fo_K[[z_0,z_1]]$.

{\bf Step (v).}
Let $k$ be the least common multiple of $k_{3,1}$ and of $k_{3,2}$; then $k$ is bounded by a constant depending only on $p$, $e$ and $[\kappa:\Fp]$.

Note that replacing
each $k_{3,i}$ by $k$ does not change the conclusions of our previous 
Steps since the radii $r_i$, $s_i$, the power series $u_i$,
the integer $\ell_{1,i}$, and the point $\mu_i$ are unaffected when we replace $k$ by a multiple of it. Moreover, in the quasiperiodic case,
$f_i^{k + \ell_{1,i}}(\alpha_i)$ still lies in $\Dbar(f_i^{\ell_{1,i}}(\alpha_i),r_i)$.

In the attracting case, we replace 
$\lambda_{i}$ by $\lambda_{i}^{k/k_{3,i}}$, also replace 
$e_{i}$ by $\frac{k}{k_{3,i}} e_{i}$, and let  
$$g_{2,i}(z):= \big( g_{1,i}(z) \big)^{k/k_{3,i}}\in \fo_K[[z]];$$ 
and in the superattracting case, we replace 
$m_{i}$ by $m_{i}^{k/k_{3,i}}$ and let 
$$g_{2,i}(z_0,z_1):=
g_{1,i}\Big(\frac{k}{k_{3,i}} z_0 , z_1^{k/k_{3,i}}\Big)\in \fo_K[[z_0,z_1]].$$ 
With this new notation, it follows from Steps~(i)--(iv) that for
any integer $n\geq 0$,
\begin{enumerate}
\item[(1)]
$f_i^{\ell_{1,i}+nk}(\alpha_i)=u_i(nk+\mu_i)$,
if $f_i^{\ell_{1,i}}(\alpha_i)$ lies in a quasiperiodic residue class;
\item[(2)]
$f_i^{\ell_{1,i}+nk}(\alpha_i)=u_i\big(\lambda_{i}^n \mu_i\big)
= u_i\big( (\pi^n)^{e_{i}} g_{2,i}(n) \mu_i \big)$,
if $f_i^{\ell_{1,i}}(\alpha_i)$ lies in an attracting residue class; and
\item[(3)] 
$f_i^{\ell_{1,i}+nk}(\alpha_i)=u_i\big(\mu_i^{m_{i}^n}\big)
= u_i\Big( \big(\pi^{m_{i}^n}\big)^{e_i} g_{2,i}(n,\pi^n) \Big)
$, 
if $f_i^{\ell_{1,i}}(\alpha_i)$ lies in a superattracting residue class,
\end{enumerate}
where
$\mu_i = u_i^{-1}(f_i^{\ell_{1,i}}(\alpha_i))$ is as in Step~(ii).
In particular, in all three cases, we have
expressed $f_i^{\ell_{1,i}+nk}(\alpha_i)$ as a power series
in $n$, $\pi^n$, and, if needed, $\pi^{m_{i}^n}$.

Let $L=\max\{\ell_{1,1},\ell_{1,2}\}$. 
For each $\ell=L,\ldots,L+k-1$ and each $i=1,2$,
choose a linear fractional transformation
$\eta_{i,\ell}\in {\rm PGL}(2,\fo_K)$ so that
$\eta_{i,\ell}\circ f_i^{\ell}(\alpha_i)\in D(0,1)$.
Then $\eta_{i,\ell}\circ f_i^{\ell-\ell_{1,i}}(D(0,1))\subseteq D(0,1)$,
because $f_i$ has good reduction.
Finally, define $E_{i,\ell}=\eta_{i,\ell}\circ f_i^{\ell-\ell_{1,i}}\circ u_i$,
so that $E_{i,\ell}\in K[[z]]$ maps $\Dbar(0,s_i)$
into $D(0,1)$.

{\bf Step (vi).}
In this step, we will write down power series $F_{i,\ell}$
for $f_i^{\ell + nk}$
in terms of $n$, $\pi^n$, and $\pi^{m_i^n}$.  We will also produce bounds
$B_{i,\ell}$ to be used in applying \cite[Lemma~3.1]{gap-compo}.
For each $i=1,2$, we consider the three cases that
$f_i^{\ell}(\alpha_i)$ lies in a quasiperiodic,
attracting (but not superattracting),
or superattracting residue class for the function $f_i^k$.

In the quasiperiodic case, for each $\ell=L,\ldots,L+k-1$,
define the power series
$$F_{i,\ell}(z_0) = E_{i,\ell}(k z_0 + \mu_i )
\in K[[z_0]],$$
so that $F_{i,\ell}(n)=\eta_{i,\ell}\circ f_i^{\ell + nk}(\alpha_i)$
for all $n\geq 0$.
All coefficients of $F_{i,\ell}$ have absolute value at most $1$,
because $|k|_p,|\mu_i|_p\leq s_i$
and $E_{i,\ell}$ maps $\Dbar(0,s_i)$ into $D(0,1)$.
Hence, we set our bound $B_{i,\ell}$ to be
$B_{i,\ell}:=0$.

Second, in the attracting (but not superattracting) case,
for each $\ell=L,\ldots,L+k-1$,
define the power series
$$F_{i,\ell}(z_0,z_1) =
E_{i,\ell}\big(z_1^{e_{i}} g_{2,i}(z_0) \mu_i\big)
\in K[[z_0,z_1]],$$
where $E_{i,\ell}$ and $g_{2,i}$ are as in Step~(v),
so that
$F_{i,\ell}(n,\pi^n)=\eta_{i,\ell}\circ f_i^{\ell + nk}(\alpha_i)$
for all $n\geq 0$.

Still in the attracting (but not superattracting) case,
because $E_{i,\ell}$ maps $\Dbar(0,s_i)$ into $D(0,1)$,
there is some $B_{i,\ell}> 0$ such that for every $j\geq 0$,
the coefficient of $z^j$
in $E_{i,\ell}(z)$ has absolute value at most $p^{j B_{i,\ell} }$.
Recalling also that $g_{2,i}\in\fo_K[[z]]$ and $|\mu_i|_p<1$,
it follows that if we write
$F_{i,\ell}(z_0,z_1)=\sum_{j=0}^{\infty} h_j(z_0)z_1^j$
(where each $h_j\in K[[z]]$),
then for each $j\geq 0$, all coefficients of $h_j$ have absolute value
at most $p^{j B_{i,\ell} }$.

Third, in the superattracting case,
for each $\ell=L,\ldots,L+k-1$, define the power series
$$F_{i,\ell}(z_0,z_1,z_{m_i}) =
E_{i,\ell}\big( g_{2,i}(z_0,z_1) z_{m_i}^{e_i} \big)
\in K[[z_0,z_1,z_{m_i}]],$$
where $E_{i,\ell}$ and $g_{2,i}$ are as in Step~(v),
so that
$F_{i,\ell}(n,\pi^n,\pi^{m_{i}^{n}})=\eta_{i,\ell}\circ f_i^{\ell + nk}(\alpha_i)$
for all $n\geq 0$.

Still in the superattracting case,
because $E_{i,\ell}$ maps $\Dbar(0,s_i)$ into $D(0,1)$,
there is some $B_{i,\ell}> 0$ such that for every $j\geq 0$,
the coefficient of $z^j$
in $E_{i,\ell}(z)$ has absolute value at most $p^{j B_{i,\ell}}$.
Hence, if we write
$F_{i,\ell}(z_0,z_1,z_{m_{i}})
=\sum_{j_1,j_2\geq 0} h_{j_1,j_2}(z_0)z_1^{j_1}z_{m_{i}}^{j_2}$
(where each $h_{j_1,j_2}\in K[[z]]$),
then as before, since $g_{2,i}\in\fo_K[[z_0,z_1]]$,
all coefficients of $h_{j_1,j_2}$ have absolute value
at most $p^{j_2 B_{i,\ell} }\leq p^{B_{i,\ell}(j_1 + j_2)}$. Finally, set
$$B:=\max_{L\le \ell\le L+k-1}\left(B_{1,\ell}+B_{2,\ell}\right).$$

{\bf Step (vii).} 
For each $\ell=L,\ldots,L+k-1$,
let $\cH_{\ell}\in \fo_K[t_1,t_2]$ be
the polynomial 
generating the vanishing ideal of the curve $Y\subset (\bP^1)^2$ dehomogenized with respect to the coordinates
determined by
$(\eta_{1,\ell},\eta_{2,\ell})$.
Then we define
\begin{equation}
\label{G l 0 m 1 2}
G_{\ell}(z_0,z_1,z_{m_1},z_{m_2}) = \cH_\ell(F_{1,\ell},F_{2,\ell})
\in K[[z_0,z_1,z_{m_1},z_{m_2}]].
\end{equation}
If there is no superattracting point for $f_i$ lying in the orbit of $\alpha_i$ modulo $\pi$, then $m_i=1$, i.e., the above power series $G_{\ell}$ has one less variable. Also, if it happens that $m_1=m_2$, then again we need one variable less in \eqref{G l 0 m 1 2}. From now on, we assume both $\alpha_1$ and $\alpha_2$ land in superattracting cycles modulo $\pi$ (i.e., $m_1,m_2>1$) and also, assume $m_1\ne m_2$; this would be the most general case in \eqref{G l 0 m 1 2} since we would deal with a $p$-adic analytic function of $4$ variables (the other cases are similar, only simpler). So, without loss of generality, we may assume $m_2>m_1>1$.

In all cases, by construction,
$G_{\ell}(n,\pi^n,\pi^{m_1^n}, \pi^{m_2^n})$
is defined for all integers $n\geq 0$, and moreover, 
\begin{equation}
\label{equivalent condition 1}
G_{\ell}(n,\pi^n,\pi^{m_1^n}, \pi^{m_2^n})=0\text{ if and only if }\Phi^{\ell+nk}(\alpha)\in Y.
\end{equation}

Now, if $G_{\ell}=0$ for some $\ell=L,\cdots, L+k-1$, then we get that $\Phi^{\ell+nk}(\alpha)\in Y$ for all $n\ge 0$ and since $\alpha$ is not preperiodic, while $Y$ is a curve, we conclude that $Y$ must be fixed under the action of $\Phi^k$, as desired (note that $k$ is bounded solely in terms of the degrees of the $f_i$'s and also on $p$, $e$ and $[\kappa:\Fp]$). 

So, from now on, we assume that $G_\ell\ne 0$ for each $\ell=L,\cdots, L+k-1$. Then for each $G_{\ell}$, we write
$$G_{\ell}(z_0,\pi^n,\pi^{m_1^n}, \pi^{m_2^n})
=\sum_{w\in\bN^3} g_w(z_0)\pi^{f_w(n)},$$
where for each $w:=(w_1,w_2,w_3)\in \bN^3$, we have that $f_w(n):=w_1n+w_2m_1^n+w_3m_2^n$. Then we let $v$ be the smallest element of $\bN^3$ with respect to the usual lexicographic order on $\bN^3$ such that $g_v\ne 0$. Also, for each element $w:=(w_1,w_2,w_3)\in \bN^3$, we define $|w|:=w_1+w_2+w_3$; for more details, see \cite[Section~3]{gap-compo}.

By our choice of the bound $B$ in Step~(vi),
and because all coefficients of $\cH_\ell$ lie
in $\fo_K$, all coefficients of $g_w$ have absolute value at most $p^{B|w|}$,
for every $w\in\bN^3$.
Since $G_{\ell}\left(n,\pi^n,\pi^{m_1^n}, \pi^{m_2^n}\right)$
is defined at every $n\geq 0$, then $g_v$ must converge
on $\Dbar(0,1)$; therefore, we may
choose a radius $0<s_{\ell}\leq 1$ for $g_v$ satisfying the hypotheses of 
\cite[Lemma~3.1]{gap-compo}. More precisely, $g_v$ has only finitely many zeros in $\Dbar(0,1)$ and so, we let $0<s_{\ell}\leq 1$ be the minimum distance
between any two distinct such zeros.

Let $s$ be the minimum of all the
$s_{\ell}$ as we vary $\ell\in \{L,\cdots, L+k-1\}$.
The set $\fo_K$ may be covered by finitely many disks
$D(\gamma,s)$ (for some points $\gamma\in\fo_K$). We let $N:=k\cdot p^M$ for some sufficiently large integer $M$ such that $|p^M|_p<s$.

Since each $G_\ell$ is nontrivial, we may apply \cite[Lemma~3.1]{gap-compo} 
(with the bound $B$ from Step~(vi) and radius $s$ from the previous
paragraph) and conclude that for each $j\in\{1,\dots, N\}$, the set 
\begin{equation}
\label{sparse set 1}
\{n\in\bN\colon \Phi^{j+nN}(\alpha)\in Y\}\text{ has natural density $0$.}
\end{equation}
In order to derive \eqref{sparse set 1}, we use \eqref{equivalent condition 1} and \cite[Lemma~3.1~and~Corollary~1.5]{gap-compo}. However, since $Y$ is periodic, the set
$$\{n\in \bN\colon \Phi^n(\alpha)\in Y\}\text{ has positive natural density,}$$
which contradicts \eqref{sparse set 1}. In conclusion, it must be that some $G_\ell$ is trivial and therefore $Y$ must be fixed by $\Phi^k$.

This concludes the proof of Theorem~\ref{main result curve}.
\end{proof}

%%%%%%%%%%%%%%%%%%%%%%%%%%%%%%%%%%%%%%%%%%%%%%%%%%%%%%%%%%%%%%%%%%%%%%%%%%%%%%%
%%%%%%%%%%%%%%%%%%%%%%%%%%%%%%%%%%%%%%%%%%%%%%%%%%%%%%%%%%%%%%%%%%%%%%%%%%%%%%%

\section{Proof of our main results}
\label{section proof}

We first prove Theorem~\ref{main result}. Our strategy is to prove separately its conclusion assuming that either
\begin{enumerate}
\item[(1)] all rational functions $f_i$ are \emph{disintegrated}, using the terminology from \cite{MS-Annals} (or \emph{non-special}, using the terminology from \cite{GN-Advances, GN}), i.e., for each $i=1,\dots, n$, we have that $\deg(f_i)\ge 2$  and also, $f_i(x)$ is not conjugated (through a linear transformation) to $x^{\pm \deg(f_i)}$, or to $\pm C_{\deg(f_i)}$ (where, for each $m\ge 2$, $\pm C_m(x)$ is the $m$-th Chebyshev polynomial, which satisfies $C_m\left(x+\frac{1}{x}\right)=x^m+\frac{1}{x^m}$), or to a Latt\'es map (i.e., the quotient of an endomorphism of an elliptic curve); or
\item[(2)] all rational functions $f_i$ are special, i.e., for each $i=1,\dots, n$, we have that $\deg(f_i)\ge 2$ and also, $f_i(x)$ is conjugated either to $x^{\pm \deg(f_i)}$, or to $\pm C_{\deg(f_i)}$, or to a Latt\'es map; or 
\item[(3)] each rational function $f_i$ has degree equal to $1$.
\end{enumerate}

In each of the above cases (1)-(3), we show that the length of the period of a periodic subvariety (satisfying the hypotheses of Theorem~\ref{main result}) is uniformly bounded. Case~(1) follows from combining our Theorem~\ref{main result curve} along with the description from \cite{MS-Annals} of the periodic subvarieties of $(\bP^1)^n$ under the coordinatewise action of $n$ disintegrated rational functions (for more details, see Theorem~\ref{main result disintegrated}). Case~(2) above follows using classical results regarding algebraic subgroups of product of tori and of elliptic curves (for more details, see Theorem~\ref{main result groups}). Finally, case~(3) follows using a direct analysis since for linear maps, one can find explicit formulas for the points in any of their orbits (for more details, see Theorem~\ref{main result trivial}). Then we finish the proof of Theorem~\ref{main result} by employing one more time the results of \cite{MS-Annals} which allows us to prove the conclusion of Theorem~\ref{main result} after splitting the action of the $f_i$'s in the above $3$ cases and using Theorems~\ref{main result disintegrated},~\ref{main result trivial}~and~\ref{main result groups}.

\begin{theorem}
\label{main result disintegrated}
Theorem~\ref{main result} holds under the additional assumption that each rational function $f_i$ has degree greater than $1$, and moreover, no $f_i(x)$ is conjugated to either $x^{\pm \deg(f_i)}$, or to $\pm C_{\deg(f_i)}$, or to a Latt\'es map.
\end{theorem}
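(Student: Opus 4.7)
The strategy is to invoke the Medvedev-Scanlon classification \cite{MS-Annals} of periodic subvarieties of $(\bP^1)^n$ under coordinatewise disintegrated actions, which reduces the structure of any periodic $Y$ to a combination of two objects whose orbit-lengths are already controlled: periodic points of a single $f_i$ (bounded via \cite{mike-thesis, Pezda, Hutz-0}), and periodic curves in $\bP^1\times\bP^1$ under the action of $(f_i,f_j)$ (bounded via our Theorem~\ref{main result curve}). The non-preperiodicity of each $\alpha_i$ will let us feed all constituent pairs into Theorem~\ref{main result curve}.

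First I would recall the key consequence of the Medvedev-Scanlon classification in the disintegrated setting: there is an integer $N$ depending only on $n$ and the degrees of the $f_i$'s such that any irreducible $\Phi^{N}$-invariant subvariety $Y\subset (\bP^1)^n$ is cut out by equations each involving at most two coordinates. That is, $Y$ is the common zero locus of finitely many relations of the form $y_i=\beta_i$ (with $\beta_i$ a periodic point of $f_i$) and $H_{ij}(y_i,y_j)=0$ (with the zero set $C_{ij}\subset \bP^1\times\bP^1$ an irreducible curve that is periodic under $(f_i,f_j)$). Applied to our periodic $Y$, replacing $\Phi$ by $\Phi^{N}$ (which multiplies the orbit length only by a bounded factor), we deduce that $\Phi^{N}(Y)=Y$, and then the full period of $Y$ under $\Phi$ divides the least common multiple of $N$ and of the periods of the finitely many constituent data $\{\beta_i\}$ and $\{C_{ij}\}$.

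Next, I would bound each constituent period uniformly. For a periodic point $\beta_i\in\bP^1(\Kbar)$ of the good-reduction rational map $f_i$, the Morton--Pezda--Hutz bounds \cite{mike-thesis, Pezda, Hutz-0} give a uniform bound on its period in terms of $p$, $e$, $[\kappa:\Fp]$, and $\deg f_i$. For each constituent curve $C_{ij}$, the hypothesis that each $\alpha_k$ is non-preperiodic for $f_k$ guarantees that the projection $(\alpha_i,\alpha_j)\in C_{ij}(K)$ (note that $(\alpha_i,\alpha_j)$ must lie on $C_{ij}$ since $\alpha\in Y$ and $C_{ij}$ is one of the defining relations) satisfies the hypotheses of Theorem~\ref{main result curve}; hence the period of $C_{ij}$ under $(f_i,f_j)$ is bounded by the constant $c$ from that theorem. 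Taking the LCM of all these periods (at most $n+\binom{n}{2}$ of them, each bounded uniformly), together with $N$, yields a bound on the period of $Y$ that depends only on $n$, $p$, $e$, $[\kappa:\Fp]$, and the degrees of the $f_i$'s.

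The main obstacle is a careful bookkeeping with the Medvedev-Scanlon dichotomy: strictly speaking, their theorem allows a pairwise relation $C_{ij}$ to be periodic under $(f_i^{a},f_j^{b})$ for certain exponents $a,b$ determined by commutation relations, rather than under $(f_i,f_j)$ itself. One therefore needs to verify that replacing the pair $(f_i,f_j)$ by a bounded iterate does not affect the applicability of Theorem~\ref{main result curve} (its constant depends only on $p$, $e$, $[\kappa:\Fp]$, and the degrees, all of which are at most multiplied by bounded factors) and does not increase the resulting bound on the period of $Y$ beyond a bound in the allowed parameters. Handling this bookkeeping cleanly, and ensuring that the $N$ in the Medvedev-Scanlon reduction is itself bounded in the right parameters, is the main technical point; once it is settled, the three-way bound (for $N$, for periodic points, and for periodic curves via Theorem~\ref{main result curve}) assembles into the claim.
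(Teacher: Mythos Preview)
Your approach is essentially the paper's: invoke Medvedev--Scanlon to reduce the structure of a periodic $Y$ to pairwise constraints, then bound each resulting periodic curve $C_{i,j}\subset\bP^1\times\bP^1$ via Theorem~\ref{main result curve} using the $K$-point $(\alpha_i,\alpha_j)$.

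That said, your write-up introduces complications that are not present and slightly garbles the logic. The paper uses \cite[Proposition~2.21]{MS-Annals} directly: any irreducible periodic $Y$ is already an irreducible component of an intersection $\bigcap_{i,j}\pi_{i,j}^{-1}(C_{i,j})$ with each $C_{i,j}$ periodic under $(f_i,f_j)$---no auxiliary integer $N$ is needed, and your sentence ``replacing $\Phi$ by $\Phi^N\ldots$ we deduce that $\Phi^N(Y)=Y$'' is not a valid deduction as written. Likewise, the ``main obstacle'' you flag about exponents $(a,b)$ does not arise: the Medvedev--Scanlon output gives curves periodic under $(f_i,f_j)$ themselves, not under mismatched iterates. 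Finally, the single-coordinate constraints $y_i=\beta_i$ are either ruled out by the non-preperiodicity of $\alpha_i$ or, in the paper's formulation, absorbed into the curve case (a vertical or horizontal $C_{i,j}$), which Theorem~\ref{main result curve} already handles in its first paragraph. Stripping these extras, your argument coincides with the paper's.
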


\begin{proof}
Since each $f_i$ is disintegrated then, according to \cite[Proposition~2.21]{MS-Annals}, we know that each (irreducible) periodic subvariety $Y\subset (\bP^1)^n$ is an irreducible component of the intersection of finitely many hypersurfaces $Y_{i,j}$ of $(\bP^1)^n$, which are of the form $Y_{i,j}:=\pi_{i,j}^{-1}(C_{i,j})$, where $C_{i,j}$ is an irreducible periodic curve defined over $K$  under the action of $(f_i,f_j)$ on $\bP^1\times\bP^1$ and $\pi_{i,j}:(\bP^1)^n\lra \bP^1\times\bP^1$ is the projection on the $(i,j)$-th coordinate axes; note that $C_{i,j}$ is also defined over $K$ since the projection of the irreducible $K$-subscheme $Y$ of $(\bP^1)^n$ on any two coordinates would still be an irreducible $K$-subscheme (of $(\bP^1)^2$).  By Theorem~\ref{main result curve}, we know that the length of the orbit of $C_{i,j}$ under the induced action of $(f_i,f_j)$ on $\bP^1\times\bP^1$ is bounded independently of $C_{i,j}$ (and only depending on the degrees of $f_i$ and $f_j$ and also on $p$, $e$ and $[\kappa:\Fp]$); note that the point $(\alpha_i,\alpha_j)\in C_{i,j}(K)$ satisfies the hypotheses from Theorem~\ref{main result curve}. Therefore, each hypersurface $Y_{i,j}$ is periodic and moreover, the length of its period under the coordinatewise action on $(\bP^1)^n$ of the rational functions $f_i$ is bounded solely in terms of the degrees of the rational functions $f_i$ and also in terms of $p$, $e$ and $[\kappa:\Fp]$. Since $Y$ is an irreducible component of $\bigcap_{i,j}Y_{i,j}$, this  concludes the proof of Theorem~\ref{main result disintegrated}.
\end{proof}

\begin{theorem}
\label{main result trivial}
Theorem~\ref{main result} holds under the assumption that each rational function $f_i$ has degree equal to $1$.
\end{theorem}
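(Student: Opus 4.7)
The plan is to conjugate each $f_i$ into a translation-like normal form on an algebraic-group open subset of $(\bP^1)^n$, invoke the classification of periodic subvarieties of commutative algebraic groups, and bound the period via torsion in a pro-$p$ group.

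First, I plan to replace $\Phi$ by a bounded iterate $\Phi^{k_0}$, with $k_0$ at most the exponent of $\PGL_2(\kappa)$ (hence bounded in terms of $p$ and $[\kappa:\F_p]$), so that each reduction $\bar f_i \in \PGL_2(\kappa)$ equals the identity; any bound on the period under the new $\Phi$ multiplies by $k_0$ to bound the original period. Since $\bar f_i = \mathrm{id}$, the derivatives of $f_i$ at its fixed points lie in $1 + \pi\fo_{\Kbar}$. Extending $K$ to $K'$ by a degree at most $2^n$ (to adjoin the fixed points of all $f_i$), and conjugating each $\bP^1$-factor by an element of $\PGL_2(\Kbar)$, I may assume each $f_i$ has one of the normal forms $x \mapsto \mu_i x$ with $\mu_i \in 1 + \pi\fo_{K'}$ (semisimple case) or $x \mapsto x + c_i$ with $c_i \in \pi\fo_{K'}$ (unipotent case). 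With this choice, $\Phi$ restricts to an automorphism of the Zariski-open set $U := \bG_m^a \times \bG_a^b \subseteq (\bP^1)^n$ (for some $a + b = n$), acting by left-translation by the element $\gamma := (\mu_1,\dots,\mu_a,c_1,\dots,c_b) \in U(K')$, which lies near the identity of $U$. Non-preperiodicity of each $\alpha_i$ forces $\alpha \in U(K')$, since each $\alpha_i$ avoids the fixed points of $f_i$.

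Second, I will analyze $W := Y \cap U$, which is irreducible (as a nonempty open subvariety of the irreducible $Y$) and periodic under $\Phi$ with the same period $N$ as $Y$ (since $W$ is dense in $Y$). Because $\Phi$ acts by translation on the commutative algebraic group $U$, the stabilizer $\Stab(W) = \{h \in U : hW = W\}$ is a closed algebraic subgroup $H \subseteq U$; using commutativity of $U$ and irreducibility of $W$, one verifies that $W$ is a single coset $u_0 \cdot H$. The period of $W$ under $\Phi$ then equals the order of the image $\bar\gamma$ of $\gamma$ in the quotient algebraic group $U/H$, which is isomorphic to $\bG_m^{a'} \times \bG_a^{b'}$ over a further bounded extension $K''$.

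Third, to bound this order, observe that the $\bG_a^{b'}$-component of $\bar\gamma$ must vanish (otherwise $W$ would have infinite period, since the additive group has no nontrivial torsion in characteristic zero). The $\bG_m^{a'}$-component lies near the identity of the torus, hence inside the group of principal units $(1 + \pi\fo_{K''})^{a'}$; its order is bounded by the exponent of the torsion subgroup of $1 + \pi\fo_{K''}$, a finite $p$-group whose order is controlled by $p$, $e$, and $[\kappa : \F_p]$ via the standard structure theorem for units of $p$-adic local fields. Combining this bound with the factor $k_0$ from the first step yields the required period bound depending only on $n$, $p$, $e$, and $[\kappa : \F_p]$.

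The main obstacle I anticipate lies in the classification step: carefully verifying that $\Stab(W)$ is a closed algebraic subgroup and that the irreducible $W$ is a \emph{single} coset (rather than a union of cosets) is standard but requires care, as does the bookkeeping of the field extensions, ramification indices, and residue-field degrees throughout the conjugation step so that the final bound retains the desired dependence on $p$, $e$, $[\kappa:\F_p]$, and $n$.
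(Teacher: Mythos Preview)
Your route is genuinely different from the paper's and, with one correction, goes through. The paper uses no algebraic-group structure at all: it applies Poonen's $p$-adic interpolation of iterates to produce, for \emph{every} point $\gamma\in Y(\Kbar)$, a convergent $p$-adic analytic map $k\mapsto \Phi^{Mk}(\gamma)$ (where $M$ is chosen so that each $f_i^M\equiv \mathrm{id}\pmod{p^c}$, which is possible since $\deg f_i=1$ and $f_i$ has good reduction); periodicity of $Y$ forces this map to land in $Y$ for infinitely many $k$, hence for all $k$ by Strassmann, so $\Phi^M$ fixes $Y$. Your argument---normalize $\Phi$ to a translation by $\gamma$ on $U=\bG_m^a\times\bG_a^b$ and bound the order of $\bar\gamma$ in $U/\Stab(W)$---is more structural and reaches the same kind of bound, and it makes transparent why the additive factors contribute nothing to the period.

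The one real error is the assertion that an irreducible $\Phi$-periodic $W\subseteq U$ must be a single coset of $H=\Stab(W)$. This is false even under the non-preperiodicity hypothesis: take $U=\bG_m^3$ with $\gamma=(\mu,\mu,\mu)$ for $\mu$ not a root of unity, and let $W=\{x/z+y/z=1\}$; then $W$ is irreducible of dimension $2$, fixed by $\Phi$, and $\Stab(W)$ is the diagonal $\bG_m$, so $W$ is not a coset. You flagged this step as the main obstacle, and indeed it fails---but you do not need it. The identity ``period of $W$ equals the order of $\bar\gamma$ in $U/H$'' follows immediately from the tautology $\gamma^N W=W\iff \gamma^N\in\Stab(W)$, with no hypothesis whatsoever on the shape of $W$. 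Once you drop the coset claim, the remainder stands: since $\bG_a$ has no nontrivial finite subgroups in characteristic $0$, any closed $H\subseteq\bG_m^a\times\bG_a^b$ is a product $H_m\times V_0$ with $V_0$ a linear subspace, so $U/H\cong\bG_m^{a'}\times\bG_a^{b'}$ (split over $K'$, because quotients of split tori are split and the quotient map is given by integral monomials), the image of $\gamma$ in the $\bG_a^{b'}$-factor must vanish, and its image in $\bG_m^{a'}$ lies in $(1+\pi\fo_{K'})^{a'}$ with order bounded by the $p$-power torsion there.
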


\begin{proof}
  We use Poonen's \cite{Poonen} result on $p$-adic interpolation of
  iterates.  Since each $f_i$ is a degree one map with good reduction
  at $\pi$, there is an $M$ depending only on $p$, $e$, and
  $[\kappa: \F_p]$ such that $f_i^M(x) \equiv x \pmod{p^c}$, where $c$ is
  the smallest integer greater than $1/(p-1)$.  Now, let
  $\gamma = (\gamma_1 \dots, \gamma_n)$ be any point in $Y(\Kbar)$,
  and let $R$ be the ring of $p$-adic integers in $K(\gamma)$.  Then
  taking the local power series expansion for $f_i^M$ at $\gamma_i$
  gives a power series $g_i \in R [ x ]$, convergent on $R$, such that
  $g_i(z) = f_i^M(z)$ for all $z  \in R$.  Since $f_i^M(x) \equiv x
  \pmod{p^c}$, we have $g_i(x) \equiv x \pmod {p^c}$.  

  By \cite{Poonen}, for each $i$, there is therefore a power series
  $\theta_i \in R [ x ]$, convergent on $R$, such that
  $\theta_i(k) = (f_i^M)^k(\gamma_i)$ for all positive integers $k$.
  Now, let $H$ be any polynomial in the vanishing ideal of $Y$,
  dehomogenized at $\gamma$.  Since there are infinitely many $k$ such
  that $\Phi^{kM}(\gamma) \in Y$ (because $Y$ is periodic under
  $\Phi$), there are infinitely many $k$ such that
  $H(\theta_1(k), \dots, \theta_n(k)) = 0.$ Since a convergent
  $p$-adic power series has finitely many zeros unless the series is
  identically zero, it follows that
  $H(\theta_1(k), \dots, \theta_n(k)) = 0$ for all $k$, so
  $\Phi^{Mk}(\gamma) \in Y$ for all $k$.  Since this is true for all
  $\gamma \in Y(\Kbar)$, we must have that $Y$ has period dividing $M$.

As an aside, we could argue also directly, using the fact that each $f_i$ is a linear map to find explicit formulas for $f_i^n(\alpha_i)$ and thus construct explicit $p$-adic analytic functions as above, thus proving the desired conclusion in Theorem~\ref{main result trivial}.
\end{proof}

\begin{theorem}
\label{main result groups}
Theorem~\ref{main result} holds under the assumption that each rational function $f_i$ has degree larger than $1$ and is conjugate either to $x^{\pm \deg(f_i)}$, or to $\pm C_{\deg(f_i)}$, or to a Latt\'es map.
\end{theorem}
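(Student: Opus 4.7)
My plan is to lift the dynamics of $\Phi$ to an isogeny of a commutative algebraic group and then to reduce the period bound to a torsion bound for a quotient group over a bounded extension of $K$.  For each $i$, the hypothesis furnishes a one-dimensional commutative algebraic group $G_i$ defined over a bounded extension of $K$, together with a cover $\pi_i\colon G_i \to \bP^1$ of degree at most $2$ that conjugates an isogeny $\tilde f_i$ of $G_i$ to $f_i$: one takes $G_i = \bG_m$ with $\pi_i = \mathrm{id}$ in the power case, $G_i = \bG_m$ with $\pi_i(x) = x + x^{-1}$ in the Chebyshev case, and $G_i$ an elliptic curve with good reduction and $\pi_i$ the quotient by $[-1]$ in the Latt\`es case.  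Enlarging $K$ by a bounded degree, I set $G := \prod_i G_i$, $\tilde\Phi := \prod_i \tilde f_i$, $\pi := \prod_i \pi_i$; lift $(\alpha_1,\dots,\alpha_n)$ to a point $\tilde\alpha \in G(K)$ whose components remain nonpreperiodic under the $\tilde f_i$ (because $\pi_i$ preserves preperiodicity); and note that, since $\tilde\Phi$ is a finite surjection, the irreducible component $\tilde Y_0$ of $\pi^{-1}(Y)$ through $\tilde\alpha$ is periodic under $\tilde\Phi$, with period controlling the period of $Y$ up to a bounded multiplicative factor.

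The crucial structural input is the classical classification: a preperiodic subvariety of a commutative algebraic group under an isogeny is a torsion translate of an algebraic subgroup (after Laurent for tori and Raynaud for abelian varieties, in the form used by Medvedev-Scanlon in the non-disintegrated case).  Applied to $\tilde Y_0$, this gives $\tilde Y_0 = \tau + H$ with $\tau \in G(\overline{K})_{\mathrm{tors}}$ and $H$ an algebraic subgroup of $G$.  I next exploit the nonpreperiodicity hypothesis: each projection $p_i(H) \subseteq G_i$ is an algebraic subgroup of a one-dimensional group, hence either finite or all of $G_i$.  If $p_i(H)$ were finite, then $\tilde\alpha_i = \tau_i + h_i$ would be a sum of two torsion elements of $G_i$, hence torsion, and thus preperiodic under $\tilde f_i$, a contradiction.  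Therefore $p_i(H) = G_i$ for every $i$, so the quotient $G/H$ is isogenous to a subproduct of the $G_i$'s and inherits good reduction from them.  The period of $\tilde Y_0$ equals the orbit length of the image $\bar\tau \in (G/H)(K')$ of $\tilde\alpha$ (for a bounded extension $K'/K$) under the endomorphism of $G/H$ induced by $\tilde\Phi$; this is bounded by a function of the order of $\bar\tau$ and $\deg\tilde\Phi$, and the order of $\bar\tau$ is bounded solely in terms of $p$, $e$, and $[\kappa:\Fp]$ by the classical bounds on the torsion of $\bG_m$ and of elliptic curves with good reduction over a local field.

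The main obstacle I anticipate lies in careful bookkeeping of the bounded field extensions and the verification that $G/H$ inherits good reduction.  Once $p_i(H) = G_i$ for all $i$ is established, the latter follows from the isogeny of $G/H$ with a subproduct of the $G_i$'s, all of good reduction by construction; the former is controlled by $\deg \pi \leq 2^n$.  A secondary subtlety is to confirm that the classification result is genuinely available in the product setting $\prod_i \tilde f_i$ with possibly different degrees $d_i$ on different factors; this can be reduced to the cases of a single $\bG_m^m$ or a product $E_{i_1}\times\cdots\times E_{i_k}$ of pairwise-isogenous elliptic curves by grouping factors, and then invoking the cited theorems.  With these in hand, the classical torsion bounds yield the uniform bound on the period of $Y$.
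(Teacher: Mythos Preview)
Your overall strategy---lift to a semiabelian variety $G=\prod_i G_i$, write the periodic lift $\tilde Y_0$ as a torsion translate $\tau+H$ of an algebraic subgroup, and then bound the period via local torsion bounds---is essentially the paper's.  (The paper cites Hindry's Lemme~10 for the torsion--coset classification of periodic subvarieties under an isogeny; Laurent and Raynaud prove the Manin--Mumford statement, which is related but not literally what is needed here.)  Your use of the nonpreperiodicity hypothesis to force $p_i(H)=G_i$ for every $i$, and hence to guarantee that $G/H$ inherits good reduction, is an elegant step that the paper does not take: the paper works directly with explicit defining equations for $H$ and never forms the quotient.

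There is, however, a genuine gap.  You assert that the period of $\tilde Y_0$ equals the orbit length of $\bar\tau$ ``under the endomorphism of $G/H$ induced by $\tilde\Phi$,'' but $\tilde\Phi$ descends to $G/H$ only if $\tilde\Phi(H)\subseteq H$, and you have not established this.  In fact $H$ need only be \emph{periodic} under $\tilde\Phi$, not invariant: for instance, on $\bG_m^2$ with $\tilde\Phi(x,y)=(x^{-2},y^2)$, the subgroup $H=\{xy=1\}$ satisfies $\tilde\Phi(H)=\{x=y\}\neq H$ while $\tilde\Phi^2(H)=H$.  So before passing to $G/H$ you must bound the period $M$ of $H$ itself and replace $\tilde\Phi$ by $\tilde\Phi^M$.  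This is precisely the step the paper carries out: writing $H$ as the zero locus of equations $\prod_i x_i^{c_i}=1$ (torus part) and $\sum_i\psi_i(y_i)=0$ (elliptic part), the paper observes that periodicity of each such hypersurface forces the degrees (respectively, the endomorphisms) on the coordinates involved in that equation to agree up to a root of unity in $\bZ$ (respectively, in an imaginary quadratic order), giving $M\le 2$ on the torus side and $M\le 6$ on the elliptic side.  Once this bound on $M$ is in hand, your quotient argument goes through with $\tilde\Phi^M$ in place of $\tilde\Phi$, and the resulting torsion bound on $\bar\tau\in (G/H)(K')$ coincides with the paper's bound on the orders of the constants $\zeta$ and $Q$.
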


\begin{proof}
Arguing as in the proof of Theorem~\ref{main result trivial}, at the expense of replacing each $f_i$ by a conjugate (which does not change the conclusion of our result), we may assume each $f_i(x)$ is either a Latt\'es map, or equal to $\pm C_{\deg(f_i)}$, or equal to $x^{\pm \deg(f_i)}$.

In this case there exist elliptic curves $E_1,\dots, E_{k}$ (for some $0\le k\le n$) and there exists an endomorphism $\Psi$ of $S:=\bG_m^{n-k}\times \prod_{i=1}^{k}E_i$ along with a finite morphism $\eta:S\lra (\bP^1)^n$ such that 
\begin{equation}
\label{conjugation upstairs}
\eta\circ \Psi = \Phi\circ \eta.
\end{equation} 
Indeed, for each Latt\'es map $f_i$ (without loss of generality, we assume $f_{n-k+1},\dots, f_n$ are all the Latt\'es maps), we know there exists an elliptic curve $E_i$ along with some endomorphism $g_i$ and also there exists some morphism $\eta_i:E_i\lra \bP^1$ of degree $2$ (identifying each point $P$ of $E_i$ with $-P$) such that $\eta_i\circ g_i = f_i\circ \eta_i$ for each $i=n-k+1,\cdots, n$. Furthermore, assuming $f_1,\dots, f_\ell$ (for some $0\le\ell\le n-k$) are all the rational functions conjugated to $\pm C_{\deg(f_i)}$, then for each $i=1,\dots, \ell$, we let $\eta_i:\bG_m\lra \bP^1$ be defined by $\eta_i(x)=x+\frac{1}{x}$; also, we let $g_i(x)=\pm x^{\deg(f_i)}$ for each such $i$. Finally, for each $i=\ell+1,\dots, n-k$, we let $g_i=f_i$ (which is thus equal to $ x^{\pm \deg(f_i)}$) and also let $\eta_i(x)=x$ for each such $i=\ell+1,\dots, n-k$. Then  \eqref{conjugation upstairs} holds with $\eta:=(\eta_1,\dots, \eta_n)$ and also with $\Psi:=(g_1,\dots, g_n):S\lra S$ (acting coordinatewise). 

Now, $Y\subset (\bP^1)^n$ is periodic under the action of $\Phi$ if and only if an irreducible component of $Z:=\eta^{-1}(Y)$ is periodic under the action of $\Psi$; moreover, the two periodic varieties would then have the same length for their corresponding orbits. Hence, it suffices to bound the length of the period for any periodic subvariety of $S$ defined over some given local field. Note that if $Y$ is defined over $K$, then $Z$ is defined over another local field $L$ whose degree over $K$ is bounded by $2^n$ since for each $i=1,\dots, \ell$ and also for each $i=n-k+1,\dots, n$, the degree of $\eta_i$ is $2$ (while $\deg(\eta_i)=1$ if $i=\ell+1,\dots, n-k$).  Therefore, replacing $K$ by $L$ simply increases the size of the residue field of the new local field and also increases its ramification index (over $\Q_p$), but both the size of the residue field of $K$ and the ramification index of $K$ increase by a scalar factor which depends only on $n$.

Furthermore, at the expense of conjugating each $g_i(x)$ (for $1\le i\le \ell$) by a suitable linear transformation $x\mapsto \zeta_i x$, we may actually assume each $g_i(x)=x^{\deg(f_i)}$ (i.e., modulo a conjugation by a linear scaling map  given by a root of unity, we may assume the sign infront of each monomial from the definition of $g_i(x)$ is positive). In order to replace the original maps by the new maps, we might need to replace $K$ by a finite extension, which depends only on the degrees of the original rational functions $f_i$.

We observe that $S$ is a split semiabelian variety (whose abelian part is a product of elliptic curves). Thus the periodic subvarieties of $S$ (under the coordinatewise action of group homomorphisms on each of its $1$-dimensional factors) are torsion translates of algebraic subgroups (a more general result holds inside any semiabelian variety, as proven in \cite[Lemme~10]{Hindry}). Moreover, noting that there are no nontrivial morphisms between a torus and an elliptic curve, then each periodic subvariety $Z$ of $S$ is the zero locus of finitely many equations of the form 
\begin{equation}
\label{form equation torus}
\prod_{i=1}^{n-k} x_i^{c_i} = \zeta
\end{equation} 
for some root of unity $\zeta\in K$ and some integers $c_i$, or 
\begin{equation}
\label{form equation elliptic}
\sum_{i=1}^k \psi_i(y_i)=Q,
\end{equation}
where $E$ is one of the elliptic curves $E_j$ (for some $j=1,\dots, k$) and $\psi_i:E_i\lra E$ are group endomorphisms, while $Q$ is a torsion point of $E(K)$. Clearly, if $E_i$ and $E$ are not isogenuous, then $\psi_i$ is the trivial map. 

Furthermore, the algebraic subgroup of $\bG_m^{n-k}$ defined by the equations 
\begin{equation}
\label{form equation torus 2}
\prod_{i=1}^{n-k} x_i^{c_i} = 1
\end{equation} 
(obtained by replacing each $\zeta$ by $1$ in \eqref{form equation torus}) must be periodic under the coordinatewise action of the $g_i$'s. Then we note that for each equation \eqref{form equation torus 2}, letting $I$ be the set consisting of all indices $i$ such that $c_i\ne 0$, we have that $\deg(g_i)$ is the same for each $i\in I$, or otherwise the hypersurface given by the equation \eqref{form equation torus 2} is not periodic under the coordinatewise action of the $g_i$'s for $i\in I$. So, each hypersurface of the form \eqref{form equation torus 2} is invariant under the coordinatewise action of $g_1^2,\dots, g_{n-k}^2$ on $\bG_m^{n-k}$ since each $g_i$ is a monomial $x^{\pm \deg(g_i)}$ and therefore $g_i^2(x)=x^{\deg(f_i)^2}$; in particular, the algebraic subgroup which is the zero locus of all equations \eqref{form equation torus 2} is fixed  by the coordinatewise action of the $g_i^2$'s.

Similarly, the algebraic subgroup of $E_1\times \cdots \times E_k$ given by the equations
\begin{equation}
\label{form equation elliptic 2}
\sum_{i=1}^k \psi_i(y_i)=0,
\end{equation}
(obtained by replacing each $Q$ by $0$ in \eqref{form equation elliptic}) must be  periodic under the coordinatewise action of $g_{n-k+1},\dots, g_n$. We claim that  the length of its period is bounded by the number of roots of unity contained in the endomorphism ring of $E$. Indeed, whenever $\psi_i$ is nontrivial, then $E_i$ and $E$ have isomorphic endomorphism rings and moroever, the endomorphism $g_i$ of $E_i$ descends to an endomorphism of $E$. Note that if $\psi_i$ and $\psi_j$ are nontrivial and the endomorphisms $g_{n-k+i}$ and $g_{n-k+j}$ of $E_i$, respectively of $E_j$,  correspond to elements $\omega_i$ and $\omega_j$ in the endomorphism ring $R$ of $E$ whose quotient $\omega_i/\omega_j$ is not a root of unity, then the algebraic subgroup given by equation \eqref{form equation elliptic 2} is not periodic under the action of $(g_{n-k+1},\cdots, g_{n})$. Therefore, the length of the period of the algebraic group given by equation \eqref{form equation elliptic 2} is absolutely bounded because there are at most $6$ roots of unity in an order of an imaginary quadratic number ring (such as the ring $R$).

Using the fact that $Z$ is the zero locus of finitely many equations of the form \eqref{form equation torus} and \eqref{form equation elliptic}, while the algebraic groups given by equations \eqref{form equation torus 2} and \eqref{form equation elliptic 2} have the size of their orbit under $\Psi$ bounded by $6$, we obtain that the length of the orbit of $Z$ under $\Psi$ is bounded in terms of the orders of the roots of unity $\zeta$ appearing in equations of the form \eqref{form equation torus} and also in terms of the orders of the torsion points $Q$ appearing in equations of the form \eqref{form equation elliptic}. 
So, we conclude that the length of the orbit under the action of $\Psi$ of any periodic subvariety of $S$ (defined over the local field $K$) is bounded solely in terms of the size of the group of roots of unity contained in $K$ and also in terms of the size of the torsion subgroups of $E_i(K)$, for $i=1,\dots, k$.  
Since any local field contains finitely many roots of unity (the bound depending solely on the size of its residue field and also depending on its ramification index over $\Q_p$), and also, any elliptic curve has finitely many torsion points over the local field $K$, with an upper bound for the size of its torsion depending solely on the size of the residue field of $K$ and on the ramification index for $K/\Q_p$ (see \cite[Chapter~VII]{Silverman}), then we obtain the desired conclusion in Theorem~\ref{main result groups}.
\end{proof}

Finally, we can prove Theorem~\ref{main result}
\begin{proof}[Proof of Theorem~\ref{main result}.] 
As proven in \cite{Alice} (see also \cite[Theorem~2.30]{MS-Annals} for the case when each $f_i$ is a polynomial), each periodic subvariety $Y$ of the dynamical system  $((\bP^1)^n, \Phi)$ may be written (after a suitable re-ordering of the coordinate axes, and thus of the rational functions acting on them) as $Y_1\times Y_2\times Y_3$, where $Y_i\subset (\bP^1)^{n_i}$ for some integers $n_i$ satisfying $n_1+n_2+n_3=n$ and moreover, the following holds:
\begin{itemize}
\item the rational functions $f_1,\cdots, f_{n_1}$ are all disintegrated;
\item each rational function $f_i(x)$ from the list $f_{n_1+1},\cdots, f_{n_1+n_2}$ is conjugate either to $x^{\pm \deg(f_i)}$, or to $\pm C_{\deg(f_i)}$, or to a Latt\'es function (and its degree is greater than $1$); 
\item the rational functions $f_{n_1+n_2+1},\cdots, f_{n_1+n_2+n_3}$ have degree equal to $1$;
\item $Y_1$ is periodic under the action of $\Phi_1:=(f_1,\cdots, f_{n_1})$ on $(\bP^1)^{n_1}$, while $Y_2$ is periodic under the action of $\Phi_2:=(f_{n_1+1},\cdots, f_{n_1+n_2})$ and $Y_3$ is periodic under the action of $\Phi_3:=(f_{n_1+n_2+1},\cdots, f_{n_1+n_2+n_3})$. 
\end{itemize}
Therefore, the uniform bound on the length of the orbit of $Y$ under the the action of $\Phi$ follows using Theorems~\ref{main result disintegrated},~\ref{main result trivial}~and~\ref{main result groups}, which provide uniform bounds for the length of the orbits of each of the periodic subvarieties $Y_j$ (for $1\le j\le 3$). 
\end{proof} 

We conclude this section by proving Theorem~\ref{main result smooth point}.
\begin{proof}[Proof of Theorem~\ref{main result smooth point}.]
  We argue by induction on $n$; the case $n=1$ follows from
  \cite{mike-thesis}. So, we assume now that $n\ge 2$ and that
  Theorem~\ref{main result smooth point} holds for endomorphisms of
  $(\bP^1)^{n-1}$. In particular, we may assume that $Y$ projects
  dominantly onto each coordinate axes, since otherwise
  $Y= Y_0\times \{\gamma\}$, where $\gamma\in\bP^1_K$ is a periodic
    point (of bounded period, according to \cite{mike-thesis}) and
    then the inductive hypothesis yields the desired conclusion.

Let $\cY$ be a model for $Y$ over $\fo_K$ such that $\cY$ is closed in
$(\bP^1_{\fo_K})^n$.  Then $\cY$ is a projective $\fo_K$-scheme, so
every point in $Y(K)$ extends to a unique point in $\cY(\fo_K)$.  We let
$\alpha$ denote the point in $\cY(\fo_K)$ to which $(\alpha_1, \dots,
\alpha_n)$ extends.

Let $\bar{\alpha}$ be the reduction of $\alpha$ modulo $p$, i.e., the
intersection of $\alpha$ with the special fiber of $\cY$. Since the
intersection of $\alpha$ with the generic fiber $Y$ of $\cY$ is a
smooth point $x$, then \cite[Proposition~2.2]{BGT-IMRN} yields the
existence of a Zariski dense,  uncountable set $U\subset \cY(\fo_v)$
consisting of sections $\beta$ whose intersection with the special
fiber of $\cY$ equals $\bar{\alpha}$. The fact that $U$ is uncountable
follows from the $p$-adic implicit function theorem used in the proof
of \cite[Proposition~2.2]{BGT-IMRN}, which yields the existence of a
$p$-adic submanifold of $\cY$ whose points all have the same reduction modulo $p$. More precisely, as shown in \cite[Proposition~2.2]{BGT-IMRN}, letting $d:=\dim(Y)$, at the expense of relabelling the coordinate axes, we know that for a sufficiently small $p$-adic neighborhood $\cU$ of $(\alpha_{n-d+1},\dots, \alpha_n)$ (where $\alpha:=(\alpha_1,\dots, \alpha_n)$), we have $p$-adic analytic functions $\cF_1,\dots, \cF_n$ on $\cU$ such that for each $\gamma\in\cU$, the point $(\cF_1(\gamma),\dots, \cF_n(\gamma))$ is on $\cY$. 

Each one of the points $\beta\in U$ is of the form $(\beta_1,\dots, \beta_n)$ and if for some point $\beta$, we have that each $\beta_i$ is not preperiodic for the action of $f_i$, then the result follows from Theorem~\ref{main result}. So, assume for each $\beta\in U$, there exists some $i\in\{1,\dots, n\}$ such that $\beta_i$ is preperiodic for $f_i$. Therefore there must exists some $i_0\in\{1,\dots, n\}$ such that there exists an uncountable set $U_0\subset \cY(\fo_v)$ consisting of points $\beta$ with the property that the corresponding $\beta_{i_0}$ is preperiodic for the action of $f_{i_0}$. Furthermore, we may assume that there exists no submanifold $\cU_0$ of $\cU$ of dimension less than $d$ such that for each $\beta\in U_0$, there exists some $\gamma\in \cU_0$ such that $\beta=(\cF_1(\gamma), \cdots, \cF_n(\gamma))$.

For the sake of simplifying the notation, we identify each $\beta_{i_0}$ with its intersection with the generic fiber of $\bP^1_{\fo_v}$ and thefore, view each $\beta_{i_0}$ as a point in $\bP^1(K)$ whose reduction modulo $p$ is $\overline{\alpha_{i_0}}$. There are two cases.

{\bf Case 1.} The set $\{\beta_{i_0}\colon \beta\in U_0\}$ is uncountable.

Then, according to our assumption, we obtain that $f_{i_0}$ has an uncountable set of preperiodic points, which automatically yields that $f_{i_0}$ itself must be preperiodic; more precisely, $\deg f_{i_0}=1$ (i.e, $f_{i_0}$ is an automorphism) and $f_{i_0}^{\ell}$ is the identity, for some positive integer $\ell$. So, at the expense of replacing $\cY$ by $\cY^\ell$, we may assume $\cY$ is fixed under the action of $f_{i_0}$ on the $i_0$-th coordinate axis. Therefore, the length of the period of $\cY$ equals the length of the period of $\cY_0$ under the coordinatewise action of the rational functions $f_i$ for $i\ne i_0$, where $\cY_0$ is the projection of $\cY$ on the remaining $(n-1)$ coordinate axes, other than the $i_0$-th coordinate axis. In this case, we are done by the inductive hypothesis; also, note that $\ell$ depends solely on $p$, $e$ and $[\kappa:\Fp]$. Indeed, the linear map $f_{i_0}(x)$ is conjugated over a quadratic extension $L$ of $K$ to one of these two maps $x\mapsto x+1$ or $x\mapsto ax$; furthermore, since $f_{i_0}$ has finite order, then $f_{i_0}(x)$ must be conjugated to a map $x\mapsto ax$, where $a$ is a root of unity contained in $L$. Because the number of roots of unity in a local field $L$ is bounded solely in terms of $p$ and of the degree $[L:\Q_p]$, our claim follows. 

{\bf Case 2.} The set $\{\beta_{i_0}\colon \beta\in U_0\}$ is countable.

Let $\cU_0\subset \cU$ be the set consisting of all points $\gamma\in \cU$ such that $(\cF_1(\gamma),\dots, \cF_n(\gamma))\in U_0$. Then we know that the set 
$\left\{\cF_{i_0}(\gamma)\colon \gamma\in \cU_0\right\}$ is countable; furthermore, by our assumption, we know that $\cU_0$ is an uncountable set, which is not contained in a submanifold of $\cU$ of dimension less than $d$. Therefore, $\cF_{i_0}$ must be constant, and thus the projection of $\cY$ on the $i_0$-th coordinate axis must be constant, contrary to our hypothesis from the beginning of our proof. This concludes the proof of Theorem~\ref{main result smooth point}.     
\end{proof}

%%%%%%%%%%%%%%%%%%%%%%%%%%%%%%%%%%%%%%%%%%%%%%%%%%%%%%%%%%%%%%%%%%%%%%%%%%%%%%%
%%%%%%%%%%%%%%%%%%%%%%%%%%%%%%%%%%%%%%%%%%%%%%%%%%%%%%%%%%%%%%%%%%%%%%%%%%%%%%%

\end{document}